\DeclareSymbolFont{bchoperators}{T1}{bch}{m}{n}
\renewcommand{\operator@font}{\mathgroup\symbchoperators}
\titleformat{\section}{\normalfont\bfseries\filcenter}{\thesection}{1em}{}
\titleformat{\subsection}{\normalfont\bfseries}{\thesubsection}{1em}{}
\titleformat{\subsubsection}{\normalfont\bfseries}{\thesubsubsection}{1em}{}
\newcommand{\Z}{{\mathbb Z}}
\newcommand{\Q}{{\mathbb Q}}
\newcommand{\R}{{\mathbb R}}
\newcommand{\F}{{\mathbb F}}
\newcommand{\C}{{\mathbb C}}
\newcommand{\CO}{{\mathcal O}}
\newcommand{\bfomega}{{\underline \omega}}
\newcommand{\bfa}{{\underline a}}
\newcommand{\Char}{\operatorname{char}}
\newcommand{\disc}{\operatorname{disc}}
\newcommand{\im}{\operatorname{im}}
\newcommand{\GL}{\operatorname{GL}}
\newcommand{\Sel}{\operatorname{Sel}}
\newcommand{\Tr}{\operatorname{Tr}}
\newcommand{\inj}{\hookrightarrow}
\newcommand{\odd}{{\text{\rm odd}}}
\newcommand{\pws}[1]{[\![#1]\!]}
\newtheorem{Theorem}{Theorem}[section]
\newtheorem{Lemma}[Theorem]{Lemma}
\newtheorem{Proposition}[Theorem]{Proposition}
\newtheorem{Corollary}[Theorem]{Corollary}
\newtheorem{Claim}[Theorem]{Claim}
\theoremstyle{definition}
\newtheorem{Remark}[Theorem]{Remark}
\newtheorem{Assumption}[Theorem]{Assumption}
\newtheorem{Assumptions}[Theorem]{Assumptions}
\numberwithin{equation}{section}
\definecolor{darkgreen}{rgb}{0,0.5,0}
\definecolor{rem}{rgb}{0.8,0,0}
\definecolor{new}{rgb}{0.7,0,0.6}
\definecolor{reply}{rgb}{0,0,0.8}
\newcounter{ass}
\begin{document}

\title[An application of ``Selmer group Chabauty'' to arithmetic dynamics]%
      {An application of ``Selmer group Chabauty'' \\ to arithmetic dynamics}
\author{Michael Stoll}
\address{Mathematisches Institut,
         Universit\"at Bayreuth,
         95440 Bayreuth, Germany.}
\email{Michael.Stoll@uni-bayreuth.de}

\date{\today}



\maketitle



\section{Introduction} 

We describe how one can use the ``Selmer group Chabauty'' method
developed by the author in~\cite{Stoll2017b} to show that certain
hyperelliptic curves of the form
\begin{equation} \label{E:curve}
  C \colon y^2 = x^N + h(x)^2 \,,
\end{equation}
where $N = 2g + 1$ is odd, $h \in \Z[x]$ with $\deg h \le g$ and $h(0)$~odd, have only
the ``obvious'' rational points $\infty$ (the unique point at infinity on the
smooth projective model of the curve) and $P_0 = (0, h(0))$, $(0, -h(0))$.

We note that curves of the form~\eqref{E:curve} can be characterized by a
nice property.

\begin{Lemma} \label{L:order}
  Let $C \colon y^2 = f(x)$ be a hyperelliptic curve of genus~$g$ over a field~$k$
  with $\Char(k) \neq 2$ such that $f$ is monic of odd degree~$2g+1$.
  We use the unique point~$\infty$ at infinity to define an embedding
  $i \colon C \to J$, where $J$ is the Jacobian variety of~$C$.
  \begin{enumerate}[\upshape(1)]\addtolength{\itemsep}{6pt}
    \item If $P \in C(k) \setminus \{\infty\}$ is such that $i(P) \in J(k)$
          has finite order~$n$, then $n = 2$ or $n \ge 2g+1$.
    \item If $P \in C(k)$ is such that $i(P)$ has order~$2g+1$ and $x(P) = 0$, then
          $f = x^{2g+1} + h(x)^2$ with a polynomial~$h \in k[x]$ of degree~$\le g$
          such that $P = (0, h(0))$. Conversely, if $f$ is of this form and
          $P = (0, h(0))$, then $i(P)$ has order~$2g+1$.
  \end{enumerate}
\end{Lemma}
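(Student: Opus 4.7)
My plan is to translate the condition ``$i(P)$ has order $n$'' into the existence of a rational function $\phi$ on $C$ with divisor $n(P)-n(\infty)$, and then analyse such a $\phi$ by writing it in the Weierstrass form $\phi = u(x) + y\,v(x)$ with $u,v \in k[x]$. The pole-order restrictions at $\infty$ give $2\deg u \le n$ and $2\deg v + (2g+1) \le n$. Applying the hyperelliptic involution $\sigma$, the product $\phi \cdot \sigma^*\phi = u(x)^2 - f(x)\,v(x)^2$ lies in $k[x]$ and, viewed on $C$, has divisor $n(P)+n(\sigma P) - 2n(\infty)$; comparing with $\Spec k[x]$ and the degree bound $\deg_x(u^2 - fv^2)\le n$ shows
\[
  u(x)^2 - f(x)\,v(x)^2 = c\,(x-a)^n
\]
for some $c \in k$ and $a = x(P)$. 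This identity is the technical heart of the argument.

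For part~(1), if $n < 2g+1$ then the term $f\,v^2$ would have degree $\ge 2g+1 > n$ unless $v = 0$, so $v=0$ and $u(x)^2 = c(x-a)^n$. This forces $n$ even and $\phi = \pm\sqrt{c}\,(x-a)^{n/2}$, whose divisor on $C$ is $\tfrac{n}{2}(P+\sigma P - 2\infty)$. Matching this with $n(P)-n(\infty)$ demands $P = \sigma P$, i.e., $P$ is a Weierstrass point, in which case $i(P)$ already has order $2$. So the only order $<2g+1$ that occurs is $n=2$.

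For part~(2), take $n = 2g+1$. Now the right-hand side has odd degree, so $v \ne 0$; the inequalities force $\deg v = 0$ (write $v = d$) and $\deg u \le g$. Comparing leading coefficients in $u^2 - d^2 f = c\,x^{2g+1}$ (using $a = 0$ and $f$ monic of degree $2g+1$) gives $c = -d^2$, hence
\[
  f(x) = x^{2g+1} + h(x)^2, \qquad h := u/d, \quad \deg h \le g.
\]
Evaluating $\phi(P)=0$ yields $y(P) = -h(0)$; replacing $h$ by $-h$ (which preserves the displayed identity) puts $P$ in the form $(0,h(0))$, as required.

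For the converse I will exhibit the witness directly. Given $f = x^{2g+1}+h(x)^2$ and $P = (0,h(0))$, the factorisation $(y-h(x))(y+h(x)) = x^{2g+1}$ together with the pole-order calculation at $\infty$ and the fact that $y-h$ does not vanish at $P' = (0,-h(0))$ (because $h(0)\ne 0$, which is forced by smoothness of $C$: otherwise $x^2 \mid f$) show that
\[
  \div(y - h(x)) = (2g+1)(P) - (2g+1)(\infty).
\]
Hence $\operatorname{ord}(i(P)) \mid 2g+1$. To rule out a proper divisor we appeal to part~(1): the order is $1$, $2$, or $\ge 2g+1$; the first is excluded since $P\ne\infty$, and the second because $y(P) = h(0)\ne 0$. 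The main obstacle in the whole argument is setting up the identity $u^2 - f v^2 = c(x-a)^n$ with the right sharpness in the degree and vanishing bounds; once this is in hand, both halves of the lemma reduce to bookkeeping.
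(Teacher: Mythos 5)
Your proof is correct and follows essentially the same route as the paper's: both reduce to analysing a function $\phi \in L(n\cdot\infty)$ written as $u(x) + y\,v(x)$, concluding $v=0$ (hence $P$ is a Weierstrass point) when $n \le 2g$ and $\phi = y - h(x)$ up to scaling when $n = 2g+1$. Your norm identity $u^2 - fv^2 = c(x-a)^n$ is just a slightly more explicit packaging of the paper's observation that $x^{2g+1}$ divides $f - h^2$, and your treatment of the converse (including the check $h(0)\neq 0$) matches the paper's.
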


\begin{proof} \strut
  \begin{enumerate}[(1)]\addtolength{\itemsep}{6pt}
    \item The divisor $n \cdot (P - \infty)$ is principal; let $\phi$ denote
          a function in~$C(k)^\times$ with this divisor. Then $\phi \in L(n \cdot \infty)$.
          If $n \le 2g$, then this Riemann-Roch space is generated by powers
          of~$x$. The divisor of a function of~$x$ alone is invariant under the
          hyperelliptic involution. This forces $P$ to be a Weierstrass point on~$C$,
          but then $i(P)$ has order~$2$.
    \item Taking $\phi$ as in~(1), we now have $\phi \in L((2g+1) \cdot \infty)$,
          which has basis $1, x, \ldots, x^g, y$. As we have just seen, $\phi$ cannot
          be a function of~$x$ alone, so (up to scaling), \hbox{$\phi = y - h(x)$}
          with $h \in k[x]$ of degree~$\le g$. The fact that $P = (0, h(0))$
          is a zero of order~$2g+1$ of~$\phi$ implies that $x^{2g+1}$ divides
          $f(x) - h(x)^2$. Since $f$ is monic of degree $2g+1$ (and $\deg h \le g$),
          it follows that $f(x) = x^{2g+1} + h(x)^2$. For the converse, we check
          that the divisor of $y - h(x)$ is $(2g+1) \cdot (P - \infty)$.
    \qedhere
  \end{enumerate}
\end{proof}

As an application of the method, we prove the following result.

\begin{Theorem} \label{T:appl}
  Let $c \in \Q$ and write $f_c(x) = x^2 + c$. We denote the iterates of~$f_c$
  by~$f_c^{\circ n}$; i.e., we set $f_c^{\circ 0}(x) = x$
  and $f_c^{\circ(n+1)}(x) = f_c(f_c^{\circ n}(x))$.
  If $f_c^{\circ 2}$ is irreducible, then $f_c^{\circ 6}$ is also irreducible.
  Assuming the Generalized Riemann Hypothesis (GRH), it also follows that
  $f_c^{\circ 10}$ is irreducible.
\end{Theorem}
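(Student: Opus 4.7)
The plan is to reduce the irreducibility of $f_c^{\circ n}$ to the nonexistence of non-trivial rational points on hyperelliptic curves of exactly the form $y^2 = x^N + h(x)^2$ featured in the paper, and then invoke the ``Selmer group Chabauty'' machinery.

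The first step is a Capelli-type norm criterion. Writing $f_c^{\circ k}(x) = f_c^{\circ(k-1)}(x^2 + c)$, if $f_c^{\circ(k-1)}$ is irreducible over~$\Q$ and $f_c^{\circ k}$ is reducible, then $x^2 + c - \alpha$ must split over $\Q(\alpha)$ for a root~$\alpha$ of $f_c^{\circ(k-1)}$, forcing
\[
  N_{\Q(\alpha)/\Q}(\alpha - c) \;=\; f_c^{\circ(k-1)}(c) \;=\; f_c^{\circ k}(0) \;=:\; c_k
\]
to be a square in~$\Q$. Under the hypothesis that $f_c^{\circ 2}$ is irreducible, Theorem~\ref{T:appl} therefore reduces to showing that none of $c_3, c_4, c_5, c_6$ (and under GRH, none of $c_3, \ldots, c_{10}$) is a square for $c \in \Q$.

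For each such~$k$, the locus where $c_k(c)$ is a square consists of the rational points on the hyperelliptic curve $y^2 = c_k(c)$. The dynamical recursion $c_k = c + c_{k-1}^2$ makes this curve exactly of the paper's type: substituting $c = 1/u$, $z = u^{2^{k-2}} y$, and $\tilde c_k(u) := u^{2^{k-1}} c_k(1/u)$, one checks that $\tilde c_k(u) = u^N + \tilde c_{k-1}(u)^2$ with $N = 2^{k-1} - 1$, so the curve transforms into
\[
  z^2 \;=\; u^N + \tilde c_{k-1}(u)^2,
\]
which matches~\eqref{E:curve} with $g = (N-1)/2 = 2^{k-2} - 1$, $h = \tilde c_{k-1}$ monic of degree~$g$, and $h(0) = 1$ odd. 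By Lemma~\ref{L:order} the point $(0,1)$ has image of odd order~$N$ in the Jacobian, and the three ``obvious'' rational points $\infty$, $(0, 1)$, $(0, -1)$ pull back on the $c$-line to $c = 0$ (the affine point $(0,0)$ on the original model, at which $f_c = x^2$ is already reducible) and to $c = \infty$; neither corresponds to a rational $c$ for which $f_c^{\circ 2}$ is irreducible. Applying the paper's Selmer group Chabauty theorem to each of these curves for $k = 3, 4, 5, 6$ shows no further rational points exist, giving the unconditional conclusion for $n = 6$.

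The principal obstacle is the Selmer Chabauty input itself. The genus $g = 2^{k-2} - 1$ reaches~$15$ at $k = 6$ and~$255$ at $k = 10$, so the requisite 2-Selmer bounds involve class-group information for number fields $\Q[x]/(\tilde c_k(x))$ of rapidly growing degree. Determining these class groups unconditionally becomes infeasible for the higher-level curves, which is precisely where GRH enters for the $n = 10$ case: it supplies the effective class-number bounds needed to make the Selmer-group computations tractable.
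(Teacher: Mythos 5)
Your reduction of the theorem to the statement that $A_k(c)=f_c^{\circ k}(0)$ is never a square (for the relevant $c$ and $k\le 6$, resp.\ $k\le 10$), and your identification of the curves $y^2=A_k(c)$ with the family $y^2=x^N+h(x)^2$ via $u=1/c$, both match the paper. But the plan to run Selmer group Chabauty directly on $y^2=\tilde c_k(u)$ for every composite $k$ has two concrete failures. First, for every even $k$ one has $A_k(-1)=0$, so the curve $y^2=a_k(u)$ carries the additional rational Weierstrass point $(-1,0)$ beyond $\{\infty,(0,1),(0,-1)\}$. The method of Section~\ref{S:method} is designed to prove $C(\Q)=C(\Q)_\odd=\{\infty,P_0,\iota(P_0)\}$ (Claim~\ref{Claim:goal}); that statement is simply false for $k=4,6,8,10$, so the method cannot succeed on those curves no matter how the computation goes. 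Second, the class-group input is not merely ``harder'' for large $k$: the relevant number field $\Q[x]/(a_k)$ has degree $2^{k-1}-1$, i.e.\ $127$, $255$, $511$ for $k=8,9,10$. The paper already needs GRH and hours of computation at degree $63$ ($k=7$) and explicitly calls degree $1023$ hopeless; GRH does not rescue degree $511$. So your final paragraph misdiagnoses the role of GRH --- in the paper it is needed only for the single field of degree $63$ attached to $A_7$.

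The idea you are missing is the multiplicative structure of the $A_n$: there are monic $B_d\in\Z[c]$ with $A_n=\prod_{d\mid n}B_d$ and $\operatorname{Res}(B_m,B_n)=\pm1$ for $m\ne n$ (Lemma~\ref{L:rigdiv}). Combined with sign information at $c=0,-1,-2$ this yields Lemma~\ref{L:AmAn} and Corollary~\ref{C:reduce}: ``$A_n(c)$ square'' forces ``$A_p(c)$ square'' for odd $n$ with $p\mid n$ prime, forces ``$A_4(c)$ square'' when $4\mid n$, and forces ``$\pm A_p(c)$ square'' (plus ``$A_2(c)$ square'') when $n=2m$ with $m$ odd. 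This reduces everything to: $A_3,A_4$ (known from \cite{DHJMS}), $A_5$ (the genus-$7$ Selmer Chabauty computation, unconditional since the Minkowski bound for the degree-$15$ field is small), $A_6$ (reduced further to a genus-$2$ curve $u^2=-t^6+2t^4-3t^2+1$ handled by classical Chabauty), $A_7$ (trivial $2$-Selmer group, the one place GRH enters), and $-A_5$ (a twist of the genus-$7$ curve, for $A_{10}$). Without this reduction step your argument does not go through for any composite $n\ge 6$.
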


This is based on the following observation. Define the polynomial
$A_n(c) = f_c^{\circ n}(0)$, for $n \ge 1$; i.e., $A_1(c) = c$ and
$A_{n+1}(c) = A_n(c)^2 + c$. The following implication holds when $n \ge 2$
(see~\cite{Stoll1992}*{Lemma~1.2}).
\[ \text{$f_c^{\circ(n-1)}$ irreducible and $A_n(c)$ not a square} \;\Longrightarrow
   \text{$f_c^{\circ n}$ irreducible}
\]
By results in~\cite{DHJMS}, we know that $A_3(c)$ and~$A_4(c)$ are never squares
unless $c = 0$, or $c = -1$ in the case of~$A_4$, but then $f_c = x^2$ or $x^2 - 1$
is already reducible.
So the proposition follows if we can show that both $A_5(c)$ and~$A_6(c)$
(and $A_7(c)$, $A_8(c)$, $A_9(c)$, $A_{10}(c)$ under GRH) can never be a square
for $c \neq 0, -1$. We note that $\deg A_n = 2^{n-1}$ and that $A_n(0) = 0$.
The curve $y^2 = A_n(x)$, whose rational points are of interest to us,
is isomorphic with the curve $y^2 = a_n(x)$, where
\[ a_n(x) = x^{2^{n-1}} A_n(x^{-1}) \,, \]
so
\[ a_1(x) = 1 \qquad\text{and}\qquad a_{n+1}(x) = x^{2^n-1} + a_n(x)^2 \,. \]
This shows that the curve under consideration has the form studied in this
paper. For $n = 5$, we have the explicit equation
\begin{align*}
  y^2 = a_5(x) &=
    x^{15} + x^{14} + 2 x^{13} + 5 x^{12} + 14 x^{11} + 26 x^{10} + 44 x^9 + 69 x^8 \\
    & \qquad {} + 94 x^7 + 114 x^6 + 116 x^5 + 94 x^4 + 60 x^3 + 28 x^2 + 8 x + 1 \,.
\end{align*}
Our method shows that this curve
has only the three obvious rational points $\infty$, $(0,1)$ and~$(0,-1)$,
which is equivalent to saying that $A_5(c)$ can never be a square for $c \neq 0$.

We also show that $A_6(c)$ is a square only for $c = 0$ and $c = -1$
using by now standard methods for rational points on curves of genus~$2$.

We show that $A_7(c)$ can only be a square when $c = 0$ by computing the
$2$-Selmer group of the Jacobian of the curve $y^2 = a_7(x)$; this Selmer
group turns out to be trivial. We need to assume GRH to verify that the
class group of the number field obtained by adjoining a root of~$a_7$ to~$\Q$
is trivial.

We can reduce the question whether $A_n(c)$ can be a square for a given $c \in \Q$
to the analogous question for $A_p(c)$, $\pm A_p(c)$, or $A_4(c)$, where
$p$ is an odd prime divisor of~$n$, for $n$ odd, $n$ twice an odd number,
and $n$ divisible by~$4$, respectively. Thus we can deal with $A_8$ and~$A_9$
by reducing the question to $A_4$ and~$A_3$, respectively. We show,
again using Selmer group Chabauty, that
$-A_5(c)$ can be a square only for $c = 0, -1$, which, together with
our result on~$A_5$, also deals with~$A_{10}$. We note that it is only
the implication
\[ \text{$f^{\circ 6}_c$ irreducible} \;\Longrightarrow\; \text{$f^{\circ 7}_c$ irreducible} \]
that depends on~GRH, whereas the implication
\[ \text{$f^{\circ 7}_c$ irreducible} \;\Longrightarrow\; \text{$f^{\circ 10}_c$ irreducible} \]
is unconditional, since it follows from the unconditional results
on $A_8$, $A_9$, and~$A_{10}$.

We describe the method in detail in Section~\ref{S:method}, prove the claims
on the various $A_n(c)$ in Section~\ref{S:dyn}, and give
some further examples and statistics in Section~\ref{S:examples}.


\section{Selmer group Chabauty for a family of hyperelliptic curves} \label{S:method}

Fix $g \ge 2$. We consider the hyperelliptic curve
\[ C \colon y^2 = f(x) := x^{2g+1} + h(x)^2 \,, \]
where $h \in \Z[x]$, and we make the following assumptions (we will make
additional assumptions later).

\begin{samepage}
\begin{Assumptions} \label{A:Ass1} \strut
  \begin{enumerate}[({A}1)]\addtolength{\itemsep}{3pt}
    \item $\deg h \le g$, so $f$ is monic of degree~$2g+1$.
    \item \label{Ass2} $h(0)$ is odd.
    \setcounter{ass}{\theenumi}
  \end{enumerate}
\end{Assumptions}
\end{samepage}

By considering $f$ and its derivative $f'(x) = (2g+1) x^{2g} + 2 h(x) h'(x)$
modulo~$2$, we see that $f$ is squarefree over~$\F_2$, hence
$f$ is squarefree and the discriminant of~$f$ is odd,
and so $C$ is a hyperelliptic curve of genus~$g$. Since $\deg f = 2g+1$ is odd,
there is a unique (and hence rational) point at infinity on (the smooth projective
model of) $C$ with respect to the given affine equation. We denote this point
by~$\infty$. We write~$J$ for the Jacobian variety of~$C$; we have the embedding
$i \colon C \to J$, $P \mapsto [P - \infty]$, defined over~$\Q$.
We note that $C$ has at least the three obvious rational points $\infty$ and~$(0, \pm h(0))$.
We write $P_0 = (0, h(0))$; if $\iota \colon C \to C$, $(x,y) \mapsto (x,-y)$,
denotes the hyperelliptic involution on~$C$, then the third point is~$\iota(P_0)$.

\begin{Lemma} \label{L:potgoodred2}
  $C$ has potentially good reduction at~$2$.
  More precisely, let $\pi = 2^{1/(2g+1)}$; then $C$ is isomorphic to
  \[ C' \colon \eta^2 + h(\pi^2 \xi) \eta = \xi^{2g+1} \]
  over~$\Q(\pi)$, with reduction mod~$\pi$ given by
  \[ \tilde{C} \colon \eta^2 + \eta = \xi^{2g+1} \,, \]
  which is a hyperelliptic curve of genus~$g$ over~$\F_2$.
\end{Lemma}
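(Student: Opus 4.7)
The plan is to exhibit the isomorphism explicitly by completing the square in a way compatible with the $(2g+1)$-th root of~$2$, and then check smoothness of the reduction directly.

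First I would start from the relation $y^2 - h(x)^2 = x^{2g+1}$ and apply the classical substitution for transforming a characteristic-$0$ hyperelliptic equation into an Artin--Schreier-like form: set $y = h(x) + 2\eta$. Expanding gives $4\eta^2 + 4 h(x)\eta = x^{2g+1}$. Now I would substitute $x = \pi^2 \xi$, so that $x^{2g+1} = \pi^{2(2g+1)} \xi^{2g+1} = 4 \xi^{2g+1}$, and divide by~$4$ to obtain
\[ \eta^2 + h(\pi^2 \xi)\,\eta = \xi^{2g+1}, \]
which is exactly~$C'$. Since the substitution $(x,y) \mapsto (\pi^2\xi,\, h(\pi^2\xi) + 2\eta)$ is an affine-linear change of coordinates over $\Q(\pi)$ with invertible Jacobian, it gives the required isomorphism $C \cong C'$ over $\Q(\pi)$.

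Next I would reduce $C'$ modulo~$\pi$ (the uniformizer of the totally ramified extension $\Q_2(\pi)/\Q_2$, with $\pi^{2g+1} = 2$). For the polynomial $h(\pi^2 \xi)$, every term except the constant term is divisible by $\pi^2$, so $h(\pi^2 \xi) \equiv h(0) \pmod \pi$; by assumption~(A\ref{Ass2}), $h(0)$ is odd, hence $h(0) \equiv 1 \pmod \pi$. Thus the reduction is $\tilde C \colon \eta^2 + \eta = \xi^{2g+1}$ over~$\F_2$, as claimed.

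Finally I would verify that $\tilde C$ is a smooth hyperelliptic curve of genus~$g$. On the affine chart, the partial derivative of $\eta^2 + \eta - \xi^{2g+1}$ with respect to~$\eta$ is $2\eta + 1 = 1$ in characteristic~$2$, so there are no affine singular points. The smooth projective model has a unique point at infinity (since $\deg_\xi = 2g+1$ is odd), and the standard computation of the genus of the Artin--Schreier-type equation $\eta^2 + \eta = \xi^{2g+1}$ in characteristic~$2$ (e.g.\ via Riemann--Hurwitz applied to the degree-$2$ cover $\tilde C \to \P^1$) gives~$g$. The main subtlety is simply keeping track of the normalization constants so that the factor $4 = \pi^{2(2g+1)}$ cancels cleanly; once the substitution is set up correctly, the rest is routine.
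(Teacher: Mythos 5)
Your proposal is correct and follows essentially the same route as the paper: the substitution $y = h(\pi^2\xi) + 2\eta$, $x = \pi^2\xi$ with $\pi^{2g+1} = 2$, reduction using $h(0)$ odd, and smoothness via $\partial_\eta = 1$ in characteristic~$2$. Your extra remarks on the point at infinity and the genus computation only make explicit what the paper leaves as routine.
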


\begin{proof}
  We set
  \[ y = 2 \eta + h(\pi^2 \xi) \qquad\text{and}\qquad x = \pi^2 \xi \]
  in the equation defining~$C$; some elementary manipulations then
  give us the equation of~$C'$. Since $h(0)$ is odd by Assumption~(A\ref{Ass2}),
  we have that $h(\pi^2 \xi) \equiv 1 \bmod \pi$, and so we get the indicated
  reduction~$\tilde{C}$ mod~$\pi$. The reduction is smooth, since the
  partial derivative of the equation with respect to~$\eta$ is constant~$= 1$,
  so $\tilde{C}$ is a curve of genus~$g$.
\end{proof}

Let $C(\Q)_\odd$ denote the set of points $P \in C(\Q)$
such that $i(P)$ has odd order; similarly for~$C(\Q_2)_\odd$.

\begin{Lemma} \label{L:odd}
  $C(\Q)_\odd = C(\Q_2)_\odd = \{\infty, P_0, \iota(P_0)\}$.
\end{Lemma}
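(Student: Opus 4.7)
The plan is to exploit the potentially good reduction established in Lemma~\ref{L:potgoodred2}. Since $C(\Q)_\odd \subseteq C(\Q_2)_\odd$, it suffices to prove the equality for $C(\Q_2)_\odd$. I would therefore pass to the totally ramified extension $K = \Q_2(\pi)$ (whose residue field is still~$\F_2$), over which $C$ acquires good reduction $\tilde C \colon \eta^2 + \eta = \xi^{2g+1}$, so that the reduction map $C(K) \to \tilde C(\F_2)$ and its companion $J(K) \to \tilde J(\F_2)$ are available.

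First, I would enumerate $\tilde C(\F_2)$: checking $\xi \in \{0,1\}$ in the defining equation shows that only $\xi = 0$ yields solutions, giving $\tilde C(\F_2) = \{\tilde\infty, (0,0), (0,1)\}$. Next, using the change of variables $x = \pi^2 \xi$, $y = 2\eta + h(\pi^2 \xi)$ from Lemma~\ref{L:potgoodred2}, I would verify that the three obvious points $\infty$, $P_0$, $\iota(P_0)$ reduce bijectively to these three points. This is where Assumption~(A\ref{Ass2}) is essential: oddness of $h(0)$ is precisely what separates the reductions of $P_0$ and $\iota(P_0)$ modulo~$\pi$ (otherwise both would give $\eta \equiv 0$).

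The crux is then a ``three up, three down'' argument. Let $P \in C(\Q_2)_\odd \subseteq C(K)$; its reduction $\tilde P$ coincides with $\tilde Q$ for some $Q \in \{\infty, P_0, \iota(P_0)\}$, so $i(P) - i(Q) \in J(K)$ lies in the kernel of the reduction map $J(K) \to \tilde J(\F_2)$. By Lemma~\ref{L:order}(2), $i(Q)$ has odd order ($2g+1$, or $1$ if $Q = \infty$), hence so does $i(P) - i(Q)$. The key input is that this kernel of reduction is a pro-$2$ group (standard: since $J$ has good reduction over the residue-characteristic-$2$ local field~$K$, the kernel of reduction is the group of $\CO_K$-points of the formal group of~$J$, which is pro-$2$). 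An element of odd order in a pro-$2$ group must vanish, so $i(P) = i(Q)$, and injectivity of $i \colon C \to J$ on points gives $P = Q$. The only real ingredient beyond direct computation is this pro-$2$ structure of the kernel of reduction; everything else is short bookkeeping under the change of variables.
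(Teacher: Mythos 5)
Your proof is correct and follows essentially the same route as the paper: both pass to $\Q_2(\pi)$ where $C$ has good reduction, observe that $\tilde C(\F_2)$ has exactly three points, and use that the kernel of reduction on the Jacobian contains no nontrivial odd-order torsion (your formal-group/pro-$2$ justification is exactly what the paper invokes when it says reduction is injective on odd-order torsion). The only cosmetic difference is that the paper concludes by counting ($\#C(\Q_2)_\odd \le 3$) while you match each point of $C(\Q_2)_\odd$ to one of the three obvious points via the difference $i(P)-i(Q)$.
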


\begin{proof}
  By Lemma~\ref{L:potgoodred2}, we have the following commutative diagram
  \[ \SelectTips{eu}{12}
     \xymatrix{ C(\Q) \ar@{^(->}[r] & C(\Q_2) \ar@{^(->}[r] \ar[dr]^-\alpha
                     & C(\Q_2(\pi)) \ar@{^(->}[r]^-i \ar[d] & J(\Q_2(\pi)) \ar[d] \\
                   & & \tilde{C}(\F_2) \ar@{^(->}[r]^-i & \tilde{J}(\F_2)\rlap{ \,,}
              }
  \]
  where $\tilde{J}$ denotes the Jacobian of~$\tilde{C}$.
  The right-most vertical map is injective on the torsion points
  of odd order. This implies that $\alpha \colon C(\Q_2)_\odd \to \tilde{C}(\F_2)$
  is also injective. Since $\tilde{C}(\F_2) = \{\infty, (0,0), (0,1)\}$
  consists of three points, this already shows that $\#C(\Q_2)_\odd$
  has at most three elements.
  On the other hand, $i(\infty) = 0$ has order~$1$ and
  $i(P_0)$ and $i(\iota(P_0)) = -i(P_0)$ have odd order~$2g+1$ (see Lemma~\ref{L:order}).
  So
  \[ \{\infty, P_0, \iota(P_0)\} \subset C(\Q)_\odd \subset C(\Q_2)_\odd
     \qquad\text{and}\qquad \#C(\Q_2)_\odd \le 3 \,,
  \]
  which proves the claim.
\end{proof}

We note that $\tilde{J}[2] = 0$ (the hyperelliptic involution on~$\tilde{C}$
is given by $\eta \mapsto \eta + 1$; the point at infinity is the only fixed point),
so that $J(\Q_2)[2]$ maps into the kernel of reduction in~$J(\Q_2(\pi))$.

For later use, we now consider regular differentials on~$C$ and their
formal integrals over~$\Q_2$. We write $v_2 \colon \bar{\Q}_2 \to \Q \cup \{\infty\}$
for the $2$-adic valuation normalized such that $v_2(2) = 1$.

\begin{Lemma} \label{L:diff}
  We set, for $j = 0,1,\ldots,g-1$, $\omega_j = x^j\,dx/y \in \Omega^1(C)$.
  Then the following holds.
  \begin{enumerate}[\upshape(1)]\addtolength{\itemsep}{6pt}
    \item \label{L:diff:1}
          $\bfomega = (\omega_0, \ldots, \omega_{g-1})$ is a basis of the $\Z_2$-module
          of N\'eron differentials on~$C$ over~$\Z_2$.
    \item \label{L:diff:2}
          Let $t = x$ be a uniformizer at~$P_0$ and write
          $\omega_j = w_j(t)\,dt$ with power series $w_j \in \Q_2\pws{t}$.
          Let $\ell_j(t) = \int_0^t w_j(\tau)\,d\tau \in \Q_2\pws{t}$ be
          the formal integral. Then the coefficient of~$t^n$ in~$\ell_j$
          has $2$-adic valuation at least $\frac{2}{2g+1} -\frac{2}{2g+1} n - \log_2 n$.
          In particular, the series $\ell_j(t)$ converge when $v_2(t) > \frac{2}{2g+1}$.
    \item \label{L:diff:3}
          Let $\tilde{t} = (y - h(x))/x^{g+1}$; this is a uniformizer at~$\infty$.
          Write $\omega_j = \tilde{w}_j(\tilde{t})\,d\tilde{t}$ with $\tilde{w}_j \in \Q_2\pws{\tilde{t}}$
          and let
          $\tilde{\ell}_j(\tilde{t}) = \int_0^{\tilde{t}} \tilde{w}_j(\tau)\,d\tau \in \Q_2\pws{\tilde{t}}$.
          Then the coefficient of~$\tilde{t}^n$ in~$\tilde{\ell}_j$ has
          $2$-adic valuation at least $\frac{2}{2g+1} + \frac{1}{2g+1} n - \log_2 n$.
          In particular, the series $\tilde{\ell}_j(\tilde{t})$ converge
          when $v_2(\tilde{t}) > -\frac{1}{2g+1}$.
  \end{enumerate}
\end{Lemma}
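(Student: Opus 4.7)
My plan is to treat the three parts separately, using the smooth model $C'/\Z_2[\pi]$ of Lemma~\ref{L:potgoodred2} as the common technical bridge. A $\Z_2[\pi]$-basis of $H^0(C',\Omega^1)$ is given by $\nu_j = \xi^j\,d\xi/(2\eta + h(\pi^2\xi))$ for $j = 0, \ldots, g-1$, and the substitutions $x = \pi^2\xi$, $y = 2\eta + h(\pi^2\xi)$ yield the key identity $\omega_j = \pi^{2j+2}\,\nu_j$. For part~(1), the Galois group $\mu_{2g+1} = \Gal(\Q_2(\pi)/\Q_2)$ acts by $\pi \mapsto \zeta\pi$, $\xi \mapsto \zeta^{-2}\xi$, $\eta$ fixed, making $\nu_j$ a $\zeta^{-(2j+2)}$-eigenvector; the twist $\omega_j = \pi^{2j+2}\nu_j$ is therefore Galois-invariant, and a direct valuation check shows that $\bigoplus_j \Z_2\,\omega_j$ is exactly the $\Gal$-invariant sublattice of $\bigoplus_j \Z_2[\pi]\,\nu_j$, which one identifies with the Néron differentials on~$J/\Z_2$.

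For part~(2), I would work at $P_0$ with $t = x$ as uniformizer. Assumption~(A\ref{Ass2}) makes $h(x)$ a unit in $\Z_2\pws{x}$, so expanding
\[ \frac{1}{y} = \frac{1}{h(x)}\left(1 + \frac{x^{2g+1}}{h(x)^2}\right)^{-1/2} = \frac{1}{h(x)}\sum_{k\ge 0}\binom{-1/2}{k}\!\left(\frac{x^{2g+1}}{h(x)^2}\right)^k \]
and using Kummer's identity $v_2\!\bigl(\binom{-1/2}{k}\bigr) = s_2(k) - 2k \ge -2k$, together with the fact that the $k$-th summand only contributes from $x^{(2g+1)k}$ onwards, gives $v_2([x^n](1/y)) \ge -2n/(2g+1)$. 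Hence $v_2([t^{n-1}]w_j) \ge -2(n-1)/(2g+1)$, and since $[t^n]\ell_j = [t^{n-1}]w_j/n$ and $v_2(n) \le \log_2 n$, the claimed bound follows.

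For part~(3), the uniformizer at~$\infty$ on~$C'$ is $\eta' = \eta/\xi^{g+1}$, and a short computation using $\pi^{2g+1} = 2$ shows that $\tilde t = \eta'/\pi$. Since $\nu_j$ is integral on~$C'$ and $\infty$ is a smooth point, we may write $\nu_j = N_j(\eta')\,d\eta'$ with $N_j \in \Z_2[\pi]\pws{\eta'}$; substituting $\eta' = \pi\tilde t$ and $d\eta' = \pi\,d\tilde t$ in $\omega_j = \pi^{2j+2}\nu_j$ then gives
\[ \tilde w_j(\tilde t) = \pi^{2j+3}\,N_j(\pi\tilde t), \]
so each coefficient satisfies $v_\pi([\tilde t^m]\tilde w_j) \ge m + 3$, i.e.\ $v_2 \ge (m+3)/(2g+1)$. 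Integrating term by term and absorbing $v_2(n) \le \log_2 n$ produces the stated bound.

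The main obstacle is the identification in part~(1): because $\Z_2[\pi]/\Z_2$ is ramified, Néron models do not literally commute with base change, so one must separately verify that the Galois-invariant $\Z_2$-lattice coming from $C'/\Z_2[\pi]$ really agrees with the invariant differentials of the Néron model of~$J/\Z_2$. Once that is in hand, parts~(2) and~(3) are essentially the same computation performed at the two uniformizers, with the asymmetry in the leading $n$-dependence, $-2n/(2g+1)$ in part~(2) versus $+n/(2g+1)$ in part~(3), coming from the different changes of variable $\xi = t/\pi^2$ and $\eta' = \pi\tilde t$.
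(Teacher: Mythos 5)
Your parts (2) and (3) are sound. Part (3) is essentially the paper's argument: pull back the integral differentials on the good model, substitute $\eta' = \pi\tilde t$ to pick up $\pi^{2j+3+m}$ on the $m$-th coefficient, and divide by $n$. Part (2) takes a genuinely different and more elementary route: instead of transporting integrality from $C'$ via $\xi = \pi^{-2}t$ as the paper does, you expand $1/y$ directly as a binomial series and use $v_2\bigl(\binom{-1/2}{k}\bigr) = s_2(k) - 2k \ge -2k$ together with the fact that the $k$-th term only starts in degree $(2g+1)k$; this yields the same bound $v_2([t^n]w_j) \ge -2n/(2g+1)$ without ever leaving $\Q_2$, which is a nice self-contained alternative.

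Part (1) is where the problems are. First, a concrete error: $\Q_2(\pi)/\Q_2$ with $\pi = 2^{1/(2g+1)}$ is \emph{not} Galois, since $\Q_2$ contains no nontrivial roots of unity of odd order; so there is no group $\mu_{2g+1} = \Gal(\Q_2(\pi)/\Q_2)$ and the eigenvector/descent argument cannot be run as you state it. (It could be salvaged over the Galois closure $\Q_2(\pi,\zeta_{2g+1})$, where the computation $\nu_j \mapsto \zeta^{-(2j+2)}\nu_j$ and the invariance of $\pi^{2j+2}\nu_j$ are correct, but then the descent is from the integers of $\Q_2(\pi,\zeta_{2g+1})$, not from $\Z_2[\pi]$, and must be redone.) Second, and more seriously, you yourself identify ``the main obstacle'' --- matching the $\Q_2$-rational sublattice of the N\'eron lattice of $C'/\Z_2[\pi]$ with the N\'eron differentials of $C/\Z_2$ --- and then leave it unaddressed, so part (1) is not actually proved. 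The paper avoids Galois descent altogether: it observes that $\omega_j = \pi^{2j+2}\omega'_j$ lies in the $\Z_2[\pi]$-span of the $\omega'_j$, and that a combination $\sum_j c_j\omega'_j$ with $c_j \in \Z_2[\pi]$ is $\Q_2$-rational exactly when $c_j \in \pi^{2j+2}\Z_2$ (using $0 < (2j+2)/(2g+1) < 1$), so the $\Q_2$-rational part of that lattice is precisely $\bigoplus_j \Z_2\,\omega_j$; the identification of this with the N\'eron lattice over $\Z_2$ is then treated as standard. For a complete write-up you should either invoke that compatibility explicitly or prove it; the descent machinery you set up is neither necessary for the lattice computation nor, as formulated, available.
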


\begin{proof}
  Since $C'$ has good reduction, the differentials $\omega'_j = \xi^j\,d\xi/(2\eta + h(\pi^2\xi))$,
  for $j = 0,1,\ldots,g-1$, form a basis of the N\'eron differentials on~$C'$
  over~$\Z_2[\pi]$. In terms of the isomorphism between $C$ and~$C'$
  given in the proof of Lemma~\ref{L:potgoodred2}, we find that
  $\omega_j = \pi^{2j+2} \omega'_j$. So each~$\omega_j$ gives a N\'eron
  differential on~$C'$, and conversely, it is easy to see that any
  N\'eron differential on~$C'$ that pulls back to a differential defined
  over~$\Q_2$ on~$C$ must be a $\Z_2$-linear combination of the~$\omega_j$.
  This proves~\eqref{L:diff:1}.

  To prove~\eqref{L:diff:2}, we observe that the~$\omega'_j$ reduce to
  regular differentials on~$\tilde{C}$, which implies that we can write
  $\omega'_j(\tau) = u_j(\tau)\,d\tau$ with $u_j \in \Z_2[\pi]\pws{\tau}$, where
  $\tau = \xi = \pi^{-2} t$ is a uniformizer at the image of~$P_0$ on~$C'$ that
  reduces to a uniformizer at the reduction of this point on~$\tilde{C}$. Then
  \[ \omega_j = \pi^{2j+2} \omega'_j = \pi^{2j+2} u_j(\tau)\,d\tau
              = \pi^{2j} u_j(\pi^{-2}t)\,dt = w_j(t)\,dt \,,
  \]
  so the coefficient of~$t^n$ in~$w_j$ has valuation
  $\ge (2j - 2n) v_2(\pi) \ge -\frac{2}{2g+1} n$. The claim follows.

  The proof of~\eqref{L:diff:3} is similar. $\tilde{\tau} = \eta/\xi^{g+1} = \pi\tilde{t}$
  is a uniformizer at~$\infty$ on~$C'$ that reduces to a uniformizer
  at~$\infty$ on~$\tilde{C}$. As before, we have that
  $\omega'_j(\tilde{\tau}) = \tilde{u}_j(\tilde{\tau})\,d\tilde{\tau}$
  with $\tilde{u}_j \in \Z_2[\pi]\pws{\tilde{\tau}}$. Then
  \[ \omega_j = \pi^{2j+2} \omega'_j = \pi^{2j+2} \tilde{u}_j(\tilde{\tau})\,d\tilde{\tau}
              = \pi^{2j+3} \tilde{u}_j(\pi\tilde{t})\,d\tilde{t} = \tilde{w}_j(\tilde{t})\,d\tilde{t} \,,
  \]
  so the coefficient of~$\tilde{t}^n$ in~$\tilde{w}_j$ has valuation
  $\ge (2j + 3 + n) v_2(\pi) \ge \frac{1}{2g+1}(3 + n)$, and the claim follows.
\end{proof}

Recall that pull-back under~$i$ induces a canonical isomorphism between
the space of regular (equivalently, invariant) $1$-forms on~$J$ and
the space of regular differentials on~$C$. We write $\bfomega_J$ for
the basis of $\Omega^1(J_{\Q_2})$ corresponding under this isomorphism
to~$\bfomega$. On the compact commutative $2$-adic Lie group~$J(\Q_2)$ we have
the logarithm homomorphism $J(\Q_2) \to T_0 J(\Q_2)$, where the target
is the tangent space at the origin. Since the space of invariant differentials
on~$J_{\Q_2}$ is canonically isomorphic to the cotangent space at the origin,
the logarithm corresponds to a pairing between~$J(\Q_2)$ and the
space of invariant differentials. Using the isomorphism mentioned above,
the pairing is given by
\[ J(\Q_2) \times \Omega^1(C_{\Q_2}) \ni \bigl(\sum_{j=1}^m i(P_j), \omega\bigr)
    \longmapsto \sum_{j=1}^m \log_\omega P_j := \sum_{j=1}^m \int_\infty^{P_j} \omega \,,
\]
where the $P_j$ are points in~$C(\bar{\Q}_2)$ and
the integral here is the Berkovich-Coleman integral on~$C$ over~$\Q_2$.
(It agrees with the abelian integral, since the Berkovich space
of~$C$ over~$\C_2$ is simply connected; this comes from the fact
that $C$ has potentially good reduction.)
We define $\log \colon J(\Q_2) \to \Q_2^g$ as the $g$-tuple of homomorphisms
given in this way by pairing with~$\bfomega$. Then $\log$ induces
a $\Z_2$-linear map on the kernel of reduction in~$J(\Q_2)$.
In particular, its image is a full $\Z_2$-lattice.

For later use we will need to know the image $\log(J(\Q_2))$.
We note that Lemma~\ref{L:diff} shows that when $P = (x,y)$
is a point on~$C$ with $v_2(x) < \frac{2}{2g+1}$,
then its image~$P'$ on~$C'$ has $v_2(\xi) = v_2(x) - \frac{2}{2g+1} < 0$,
so it reduces to~$\infty$ on~$\tilde{C}$, and we have that
$\tilde{t}(P) = \pi^{-1} \tilde{\tau}(P')$ has valuation $> -\frac{1}{2g+1}$.
We can then evaluate $\log i(P) = \int_\infty^P \bfomega \in \Q_2^g$
numerically by evaluating the series $\tilde{\ell}_j$ at~$\tilde{t}$
to any desired $2$-adic accuracy. Similarly, if $v_2(x) > \frac{2}{2g+1}$,
we can evaluate $\log i(P)$ numerically by evaluating the series
$\ell_j$ at~$x$, since we have that $\log i(P) = \log i(P) - \log i(P_0) = \int_{P_0}^P \bfomega$
($i(P_0)$ has finite order and so is killed by the logarithm).
We recall that every point $Q \in J(\Q_2)$ is represented by a divisor
of the form $D - d \cdot \infty$, where $D$ is an effective divisor
of degree $d \le g$ defined over~$\Q_2$.
This implies that $J(\Q_2)$ is generated by points of the form
$\Tr_{K/\Q_2} i(P)$ where $P \in C(K)$ and $K$ is an extension of~$\Q_2$
of degree~$\le g$. Since the ramification index of~$K$ is at most~$g$,
it follows that $v_2(x(P)) \neq \frac{2}{2g+1}$, so that we can
evaluate $\log i(P)$ for all relevant~$P$ in the way sketched above.

Write $A = \Q[x]/\langle f \rangle$; we denote
the image of~$x$ in~$A$ by~$\theta$, so that $A = \Q[\theta]$. Then
$A$ is an \'etale algebra over~$\Q$; it is a product of algebraic
number fields corresponding to the irreducible factors of~$f$.
Recall that the $2$-Selmer group of~$J$ can be identified with a
subgroup of~$A^\times/A^{\times 2}$; more precisely, a subgroup
of elements of~$A(S, 2)$ with trivial norm in~$\Q^\times/\Q^{\times 2}$.
Here $S$ is the set of primes of~$A$ above prime numbers~$p$
such that $p^2$ divides the discriminant of~$f$, and $A(S, 2)$
is the (finite) subgroup of~$A^\times/A^{\times 2}$ consisting
of elements whose valuation at all primes outside~$S$ is even; see~\cite{Stoll2001}.
We let $H$ denote the subgroup of~$A(S, 2)$ consisting of elements
with trivial norm. We identify the $2$-Selmer group~$\Sel_2(J)$
with its image in~$H$.
(Usually, we would have to include the primes above~$2$ in~$S$,
but here we can leave them out, since $f$ has odd discriminant.
The standard argument then shows that elements in the image of the
Selmer group must have even valuation at the primes above~$2$.)

We have the $2$-adic local analogue
\[ H_2 = \Z_2[\theta]^\times/\Z_2[\theta]^{\times 2} \]
(note that $\Z_2[\theta]$ is the ring of integers of
$A_2 = \Q_2[x]/\langle f \rangle = A \otimes_{\Q} \Q_2$)
and the canonical homomorphism $\rho_2 \colon H \to H_2$.

We need a further assumption.

\begin{samepage}
\begin{Assumption} \label{A:Ass2} \strut
  \begin{enumerate}[({A}1)]\addtolength{\itemsep}{3pt}\setcounter{enumi}{\theass}
    \item The map $\rho_2|_{\Sel_2(J)} \colon \Sel_2(J) \to H_2$ is injective.
    \setcounter{ass}{\theenumi}
  \end{enumerate}
\end{Assumption}
\end{samepage}

\begin{Remark} \label{R:largerSel}
  In the assumption above, we can replace the Selmer group by the larger
  subgroup of~$H$ that takes into account all local conditions except those
  at~$2$. The conditions are equivalent, but this variant is potentially
  more efficient in computations. See~\cite{Stoll2017b}.
\end{Remark}

\begin{Remark} \label{R:Cl}
  One way of making sure the assumption above is satisfied is to assume
  that $A$ has odd class number (i.e., all number fields in the splitting
  of~$A$ as a product of number fields have odd class number) and that
  $\disc(f)$ is squarefree.
  Then $H$ is contained in~$\Z[\theta]^\times/\Z[\theta]^{\times 2}$
  (and $\Z[\theta] \subset A$ is the ring of integers of~$A$),
  and the assumption is satisfied when the latter group injects into~$H_2$.
  We can first restrict to the subgroup of elements satisfying the local
  conditions at the infinite place; this amounts to replacing the Selmer
  group by the larger group mentioned in Remark~\ref{R:largerSel} above.

  Note also that in this setting the injectivity follows when
  there is no quadratic extension of~$A$ that is unramified at all
  finite places and totally split at all places above~$2$ (and has
  certain restricted behavior at infinity). By Class Field Theory,
  this is the case when the narrow class group of $\Z[\frac{1}{2}, \theta]$
  has odd order.

  We also note that it is easy to see that $\disc(f)$ is divisible by~$h(0)^2$.
  So assuming that $\disc(f)$ is squarefree forces $h(0) = \pm 1$.
\end{Remark}

Write $\delta_2$ for the composition $J(\Q_2) \to H^1(\Q_2, J[2]) \hookrightarrow H_2$.
Then we have the following commutative diagram.
\begin{equation} \label{E:diagram}
  \SelectTips{eu}{12}
  \xymatrix{ J(\Q) \ar[r]^-\delta \ar@{^(->}[d] & \Sel_2(J) \ar@{^(->}[d]^-{\rho_2} \\
             J(\Q_2) \ar[r]^-{\delta_2} & H_2
           }
\end{equation}
Since the kernel of the top left horizontal map~$\delta$ is~$2 J(\Q)$ and
$\rho_2$ is injective, we see that the image of a point $Q \in J(\Q) \setminus 2J(\Q)$
under~$\delta_2$ is nontrivial. Since the kernel of~$\delta_2$ is~$2J(\Q_2)$,
this in turn implies that a point $Q \in J(\Q)$ that is divisible
by~$2$ in~$J(\Q_2)$ is already divisible by~$2$ in~$J(\Q)$.

If we write $\widetilde{\Sel} \subset H$ for the supergroup of~$\Sel_2(J)$
discussed in Remark~\ref{R:largerSel}, then
$\rho_2(\Sel_2(J)) = \rho_2(\widetilde{\Sel}) \cap \delta_2(J(\Q_2))$.
So we can compute $\Sel_2(J)$ from~$\widetilde{\Sel}$ and the image
of~$\delta_2$.
If the Selmer group is trivial, then $J(\Q)$ must be finite of odd order,
and it follows immediately that $C(\Q) = C(\Q)_{\odd}$.

We now come back to determining the image of~$\log$.
By Nakayama's Lemma, $\log(J(\Q_2))$ is generated as a $\Z_2$-module
by the images of representatives of a basis of (the $\F_2$-vector space)
$J(\Q_2)/2J(\Q_2)$. We know that $\dim_{\F_2} J(\Q_2)/2J(\Q_2) = g + \dim_{\F_2} J(\Q_2)[2]$,
and $\dim_{\F_2} J(\Q_2)[2]$ is one less than the number of irreducible
factors of~$f$ over~$\Q_2$; see~\cite{Stoll2001}. Also, we have the
injective homomorphism $\delta_2 \colon J(\Q_2)/2J(\Q_2) \to H_2$,
which we can compute fairly easily.

A more thorough analysis allows us to pin down $\log(J(\Q_2))$ precisely
without having to search for suitable points in~$J(\Q_2)$.
First we need a handle on~$H_2$.

\begin{Lemma} \label{L:H2}
  We denote reduction mod~$2$ by a bar.
  Note that $\F_2[\bar{\theta}] = F_1 \times \cdots \times F_m$
  is a direct product of finite field extensions of~$\F_2$
  (this is because the discriminant of~$f$ is odd).
  For $\alpha \in \Z_2[\theta]$, we let $\bar{\alpha}_j \in F_j$ denote the $j$th
  component of $\bar{\alpha} \in \F_2[\bar{\theta}]$.
  Let $R \subset \Z_2[\theta]$ be a complete set of representatives of~$\F_2[\bar{\theta}]$.
  \begin{enumerate}[\upshape(1)]\addtolength{\itemsep}{6pt}
    \item Let $\alpha \in \Z_2[\theta]$. Then $1 + 4\alpha$ is a square in~$\Z_2[\theta]$
          if and only if $\Tr_{F_j/\F_2} \bar{\alpha}_j = 0$ for all~$j$. We define
          $\Tr \colon \F_2[\bar{\theta}] \to \F_2^m$ as
          $\Tr(\bar{\alpha}) = (\Tr_{F_j/\F_2} \bar{\alpha}_j)_{j=1,\ldots,m}$.
    \item Let $B \subset R$ be representatives of an $\F_2$-basis of~$\F_2[\bar{\theta}]$
          and let $B' \subset R$ be representatives of an $\F_2$-basis of
          $\F_2[\bar{\theta}]/\ker \Tr$. Then $(1 + 2\beta)_{\beta \in B}$
          together with $(1 + 4\beta')_{\beta' \in B'}$
          is an $\F_2$-basis of~$H_2$.
  \end{enumerate}
\end{Lemma}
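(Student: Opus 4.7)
Because $\disc(f)$ is odd, $\Z_2[\theta]$ is the maximal order of~$A_2$ and decomposes as a product $\prod_{j=1}^m \CO_j$, where $\CO_j$ is the ring of integers of an unramified extension $K_j/\Q_2$ with residue field~$F_j$. Both assertions respect this product decomposition, so the whole proof reduces to the componentwise statement; I drop the subscript~$j$ in what follows and let $F$ denote the residue field.

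For~(1), I would first observe that $u^2 = 1 + 4\alpha$ forces $u \equiv \pm 1 \pmod{2}$, so after replacing $u$ by~$\pm u$ and writing $u = 1 + 2\epsilon$, the equation becomes $\alpha = \epsilon + \epsilon^2$, and conversely. Thus the problem is to determine when $T^2 + T - \alpha$ has a root in~$\CO$. Since the derivative $2T + 1$ is always a unit, Hensel's lemma reduces this to solvability of $T^2 + T - \bar\alpha$ over~$F$, which by Artin--Schreier is exactly the condition $\Tr_{F/\F_2}(\bar\alpha) = 0$.

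For~(2), I would set up the filtration $U^{(k)} = 1 + 2^k \CO$ and analyze $\CO^\times/\CO^{\times 2}$. Since $F^\times$ has odd order, squaring is a bijection on Teichm\"uller representatives, so $\CO^\times/\CO^{\times 2} = U^{(1)}/U^{(1)2}$. The identity $(1 + 2u)^2 = 1 + 4(u + u^2)$ yields the inclusion $U^{(1)2} \subseteq U^{(2)}$ and, upon reducing modulo~$8$, identifies the image of $U^{(1)2}$ in $U^{(2)}/U^{(3)} \cong F$ (via $1 + 4\alpha \mapsto \bar\alpha$) with $\wp(F) = \ker \Tr_{F/\F_2}$, where $\wp(x) = x + x^2$. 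A second Hensel argument, applied to $T^2 + T - 2x$ (again with unit derivative), shows $U^{(3)} \subseteq U^{(1)2}$, so $U^{(1)2}/U^{(3)} = \ker\Tr$ exactly. Combining these yields a short exact sequence
\[
  0 \To F/\ker\Tr \To \CO^\times/\CO^{\times 2} \To F \To 0,
\]
in which the projection sends $[1+2u] \mapsto \bar u$ and the injection sends $[1 + 4\alpha] \mapsto [\bar\alpha]$. Taking the product over~$j$, the classes $(1+2\beta)_{\beta \in B}$ project to $(\bar\beta)_{\beta \in B}$, a basis of $\F_2[\bar\theta]$, while the $(1+4\beta')_{\beta' \in B'}$ lie in the kernel and, via the injection, correspond to representatives of a basis of $\F_2[\bar\theta]/\ker\Tr$. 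Exactness then gives that their union is an $\F_2$-basis of $H_2 = \prod_j \CO_j^\times/\CO_j^{\times 2}$.

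The main obstacle is the filtration-level bookkeeping in~(2), namely establishing $U^{(3)} \subseteq U^{(1)2}$ (by Hensel on $T^2 + T - 2x$) and pinning down the image of $U^{(1)2}$ in $U^{(2)}/U^{(3)}$ as the Artin--Schreier image~$\wp(F)$; these steps crucially use that $K/\Q_2$ is unramified, so that the graded pieces of the filtration are copies of~$F$. Once this is in place, part~(1) supplies the computation of the kernel $U^{(2)}/U^{(1)2} \cong F/\ker\Tr$, and the basis statement in~(2) follows from exactness and the matching dimension count $|B| + |B'| = n + m = \dim_{\F_2} H_2$.
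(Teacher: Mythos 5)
Your proof is correct and follows essentially the same route as the paper: both reduce $H_2$ to the principal units modulo squares, exploit the identity $(1+2\beta)^2 = 1+4(\beta+\beta^2)$, and invoke Artin--Schreier to translate solvability of $\beta+\beta^2=\alpha$ into the vanishing of the residue traces, with part~(2) obtained from the same two-step filtration (the paper's quotient $(1+4\Z_2[\theta])/P^2 \simeq \F_2^m$ is exactly your $F/\ker\Tr$ read through the trace map). Your explicit use of Hensel's lemma on $T^2+T-\alpha$ merely makes precise the step the paper leaves terse (that $1+4\alpha \equiv (1+2\beta)^2 \bmod 8$ already forces $1+4\alpha$ to be a square), and the componentwise reduction to unramified extensions is an equivalent repackaging of the paper's decomposition $\F_2[\bar\theta]=F_1\times\cdots\times F_m$.
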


\begin{proof}
  Let $P = 1 + 2\Z_2[\theta]$ be the group of principal units. Since
  $\Z_2[\theta]^\times/P \simeq F_1^\times \times \cdots \times F_m^\times$
  is finite of odd order, the inclusion $P \inj \Z_2[\theta]^\times$ induces
  an isomorphism $P/P^2 \simeq H_2$.
  \begin{enumerate}[(1)]\addtolength{\itemsep}{6pt}
    \item If $1 + 4\alpha$ is a square, it must be the square of an element
          of the form $1 + 2\beta$ with \hbox{$\beta \in \Z_2[\theta]$}.
          Now $(1 + 2\beta)^2 = 1 + 4(\beta + \beta^2)$,
          so the condition is that \hbox{$\alpha = \beta + \beta^2$} for some
          \hbox{$\beta \in \Z_2[\theta]$}. Since $\Tr \bar{\beta} = \Tr \bar{\beta}^2$,
          every such~$\alpha$ satisfies $\Tr \bar{\alpha} = 0$.
          Conversely, if \hbox{$\Tr \bar{\alpha} = 0$}, there is $\bar{\beta} \in \F_2[\bar{\theta}]$
          such that $\bar{\alpha} = \bar{\beta} + \bar{\beta}^2$.
          Then $1 + 4\alpha \equiv (1 + 2\beta)^2 \bmod 8\Z_2[\theta]$,
          which implies that $1 + 4\alpha$ is a square as well.
    \item We clearly have that $P^2 \subset 1 + 4\Z_2[\theta]$.
          The result of~(1) implies that $1 + 4\alpha \mapsto \Tr \bar{\alpha}$ induces
          an isomorphism $(1 + 4\Z_2[\theta])/P^2 \simeq \F_2^m$.
          In particular, $(1 + 4\beta')_{\beta' \in B'}$ is a basis
          of the $\F_2$-vector space on the left.
          Also, $1 + 2\beta \mapsto \bar{\beta}$ induces an isomorphism
          $P/(1 + 4\Z_2[\theta]) \simeq \F_2[\bar{\theta}]$, so that
          $(1 + 2\beta)_{\beta \in B}$ gives a basis of the space on the left.
          Combining these two bases gives the result.
    \qedhere
  \end{enumerate}
\end{proof}

Now we can use this information to get at the image of~$\log$.

\begin{Lemma} \label{L:imlog}
  We write ${\Sigma\!\Tr} \bar{\alpha}$ for $\sum_{j=1}^m \Tr_{F_j/\F_2} \bar{\alpha}_j \in \F_2$,
  with notation as in Lemma~\ref{L:H2}.
  \begin{enumerate}[\upshape(1)]\addtolength{\itemsep}{6pt}
    \item \label{L:imlog1}
          We have that $\Sigma\!\Tr \bar{\theta}^{-d} = 0$ for $1 \le d \le 2g$.
    \item \label{L:imlog2}
          The image in~$H_2$ of~$J(\Q_2)$ under~$\delta_2$ has basis represented by
          $(1 - 2(-\theta)^{-d})_{d=1,\ldots,g}$ and $(1 - 4(-\theta)^{-d})_{d \in I}$,
          where $I \subset \{1,3,5,\ldots,2g-1\}$ is such that $(\bar{\theta}^{-d})_{d \in I}$
          is a basis of $\ker {\Sigma\!\Tr}/\ker \Tr$.
    \item \label{L:imlog3}
          Let $\bfomega = \sum_{n=0}^\infty t^n \bfa_n\,dt$ with $t = x$ a uniformizer
          at~$P_0 = (0, h(0))$ and $\bfa_n \in \Q_2^g$ with $v_2(\bfa_n) \ge \lceil -\frac{2n}{2g+1} \rceil$.
          Then
          \[ \log(J(\Q_2)) = \bigl\langle \sum_{n=1}^\infty \frac{2^n}{n} \bfa_{dn-1}
                                          : d = 1,\ldots,g \bigr\rangle_{\Z_2}
                             + \bigl\langle \sum_{n=1}^\infty \frac{4^n}{n} \bfa_{dn-1}
                                            : d \in I \bigr\rangle_{\Z_2} \,.
          \]
  \end{enumerate}
\end{Lemma}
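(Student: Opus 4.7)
The plan is to prove~(1) using Newton's identities in characteristic~$2$, then to establish~(2) and~(3) in parallel by constructing an explicit family of elements in $J(\Q_2)$---traces of points on $C$ defined over totally ramified extensions of $\Q_2$ of degree $\le g$---and evaluating both $\delta_2$ and $\log$ on them. The hardest step will be sign bookkeeping in~(2); once the right elements of $J(\Q_2)$ are in hand, the $\log$-computation in~(3) is a direct series manipulation.

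For~(1), set $\bar{g}(x) := x^{2g+1}\bar{f}(1/x) \in \F_2[x]$; its roots are the inverses of those of~$\bar{f}$, so $\Sigma\!\Tr\,\bar{\theta}^{-d}$ equals the $d$th power sum $p_d$ of the roots of~$\bar{g}$. A direct expansion using $\bar{f} = x^{2g+1} + \bar{h}(x)^2$ and $\bar{h}(0) = 1$ shows that $\bar{g}$ contains only odd powers of~$x$ together with the constant~$1$; in particular the coefficient $b_k$ of~$x^k$ in~$\bar{g}$ vanishes for every even $k$ with $0 < k \le 2g$. Since $p_{2d} = p_d^2$ in characteristic~$2$, it suffices to treat odd $d$; for such $d$ with $d \le 2g$, Newton's identity $p_d + \sum_{k=1}^{d-1} b_{2g+1-k}\,p_{d-k} + d\,b_{2g+1-d} = 0$ collapses: the standalone $b_{2g+1-d}$ vanishes as an even-indexed coefficient in $(0,2g]$, and the surviving terms in the sum (those with $k$ even) involve only $p_{d-k}$ with $d-k$ odd and $<d$, which vanish by induction.

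For~(2), for each $d \in \{1,\ldots,g\}$ I would fix $\alpha_d \in \bar{\Q}_2$ with $\alpha_d^d = 2$, so that $K_d := \Q_2(\alpha_d)$ is totally ramified of degree~$d$ with $v_2(\alpha_d) = 1/d$, and apply Hensel's lemma to $y^2 = f(\alpha_d) = h(\alpha_d)^2\bigl(1 + \alpha_d^{2g+1}/h(\alpha_d)^2\bigr)$ (the second factor is a principal unit of valuation $(2g+1)/d > 2$) to produce $P_d \in C(K_d)$ above $\alpha_d$. Set $D_d := \Tr_{K_d/\Q_2}\,i(P_d) \in J(\Q_2)$. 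For $d \in I$ the analogous construction with $\beta_d^d = 4$ yields $E_d \in J(\Q_2)$; convergence is still ensured by Lemma~\ref{L:diff}(2), since $v_2(\beta_d) = 2/d > 2/(2g+1)$ for $d \le 2g-1$. One computes $\delta_2(D_d) = N_{K_d/\Q_2}(\alpha_d - \theta) = (-1)^d(\theta^d - 2)$, which---using that $-\theta$ is a square in~$A_2$ (because $i(P_0)$ has odd order $2g+1$ by Lemma~\ref{L:order})---reduces modulo squares to $1 - 2(-\theta)^{-d}$ up to a correction factor $1 - 4(-\theta)^{-2d}$; this correction is a square in $A_2$ precisely when $\bar{\theta}^{-d} \in \ker\Tr$ (by Lemma~\ref{L:H2}(1)) and otherwise coincides with $\delta_2(E_d)$ in $H_2$, so that after replacing $D_d$ by $D_d \pm E_d$ where needed one realizes the stated $\delta_2$-values exactly. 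Linear independence and spanning in $H_2$ then follow from Lemma~\ref{L:H2}(2): the $D_d$-classes descend to the $\F_2$-linearly independent set $\{\bar{\theta}^{-d}\}_{d=1}^g \subset \F_2[\bar{\theta}]$ (independent because $\bar{\theta}$ is a unit with minimal polynomial of degree $2g+1$), and the $E_d$-classes descend to a basis of $\ker\Sigma\!\Tr/\ker\Tr$ by the choice of~$I$ (with part~(1) ensuring $\bar{\theta}^{-d} \in \ker\Sigma\!\Tr$). The dimension count $g + |I| = g + (m-1) = \dim_{\F_2} J(\Q_2)/2J(\Q_2)$ forces these to form a basis.

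For~(3), apply $\log$ to the same $D_d$ and $E_d$. By Lemma~\ref{L:diff}(2), $\log\,i(P_d) = \sum_{m \ge 1}(\alpha_d^m/m)\,\bfa_{m-1}$ in $K_d^g$, so taking traces gives $\log D_d = \sum_{m \ge 1}\Tr_{K_d/\Q_2}(\alpha_d^m)/m \cdot \bfa_{m-1}$. The elementary identity $\Tr_{K_d/\Q_2}(\alpha_d^m) = d\,\alpha_d^m$ when $d \mid m$ and $0$ otherwise collapses the sum to $\sum_{n \ge 1}(\alpha_d^d)^n/n \cdot \bfa_{dn-1} = \sum_{n \ge 1}(2^n/n)\,\bfa_{dn-1}$; the parallel computation with $\beta_d^d = 4$ yields $\log E_d = \sum_{n \ge 1}(4^n/n)\,\bfa_{dn-1}$. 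Since by~(2) the classes of $D_d$ and $E_d$ generate $J(\Q_2)/2J(\Q_2)$ and $\log$ factors through $J(\Q_2)/J(\Q_2)_{\tors}$, a Nakayama argument applied to the finitely generated $\Z_2$-module $\log(J(\Q_2))$ shows that $\{\log D_d\}_{d=1}^g \cup \{\log E_d\}_{d \in I}$ generate $\log(J(\Q_2))$, proving~(3).
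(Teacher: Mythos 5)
Your proposal is correct and follows essentially the same route as the paper: parts (2) and (3) are obtained exactly as there, by taking traces of points with $x$-coordinate $2^{1/d}$ (resp.\ $4^{1/d}$) over totally ramified extensions of degree~$d$, computing $\delta_2$ via the norm $(-1)^d(\theta^d-2)$ and $\log$ via the identity for $\Tr_{K_d/\Q_2}(\alpha_d^m)$, and concluding with the dimension count $g+(m-1)$, Lemma~\ref{L:H2}, and Nakayama. The only cosmetic differences are that you prove~(1) by Newton's identities and induction where the paper uses the generating-function identity $\sum_n \Tr(\alpha^n)z^n = -zp'(z)/p(z)$ together with $\bar{f}'(z)=z^{2g}$ in characteristic~$2$ (both arguments rest on the same observation that the reciprocal of $\bar{f}$ has only odd-degree terms besides the constant), and that in~(2) you track the discrepancy between $(-1)^d(\theta^d-2)$ and $(-\theta)^d(1-2(-\theta)^{-d})$ for odd~$d$ more explicitly than the paper does, correctly noting that it is absorbed modulo squares by classes of the form $1-4(-\theta)^{-d}$, which lie in the span of the $\delta_2(E_{d'})$ for $d'\in I$ by part~(1).
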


\begin{proof}
  To prove~\eqref{L:imlog1}, recall that if $p$ is the reciprocal characteristic
  polynomial of $\alpha \in L$, where $L$ is an \'etale algebra over a field~$K$,
  then we have the following identity of formal power series over~$K$.
  \[ \sum_{n=1}^\infty (\Tr_{L/K} \alpha^n) z^n = \frac{-z p'(z)}{p(z)} \,. \]
  (We have that $p(z) = z^{[L:K]} c(z^{-1}) = 1 - (\Tr_{L/K} \alpha) z + \ldots$
  when $c$ is the characteristic polynomial of~$\alpha$.)
  We apply this to $\alpha = \bar{\theta}^{-1} \in \F_2[\bar{\theta}] = L$
  and $K = \F_2$. In this case,
  $p(z) = \bar{f}(z) = \bar{h}(z)^2 + z^{2g+1} \in \F_2[z]$, so that
  \[ \frac{-z p'(z)}{p(z)} = \frac{z (2 \bar{h}(z) \bar{h}'(z) + (2g+1) z^{2g})}{\bar{f}(z)}
                           = \frac{z^{2g+1}}{\bar{f}(z)} \in \F_2\pws{z} \,,
  \]
  which shows that
  $\Sigma\!\Tr \bar{\theta}^{-d} = \Tr_{\F_2[\bar{\theta}]/\F_2} \bar{\theta}^{-d} = 0$
  when $1 \le d \le 2g$.

  We know that $\dim_{\F_2} J(\Q_2)/2J(\Q_2) = \dim_{\F_2} J(\Q_2)[2] + g$.
  Since
  \[ \#I = \dim (\ker {\Sigma\!\Tr}/\ker \Tr) = m-1 = \dim J(\Q_2)[2] \,, \]
  to show~\eqref{L:imlog2} it is enough to show that the given elements are in the image
  of~$\delta_2$, since their number is correct and they are linearly independent
  according to~Lemma~\ref{L:H2}. (Note that the reductions of $(-\theta)^d$,
  $0 \le d \le 2g$, form a basis of~$\F_2[\bar{\theta}]$, so by~\eqref{L:imlog1},
  we can select representatives
  of a basis of $\ker {\Sigma\!\Tr}/\ker \Tr$ from powers of~$(-\theta)$.
  Also note that $\Sigma\!\Tr 1 = 1$ and
  $\Sigma\!\Tr \bar{\theta}^{-2d} = \Sigma\!\Tr \bar{\theta}^{-d}$,
  so it is sufficient to consider powers with odd exponents.)

  Let $K$ be a finite extension of~$\Q_2$ and take $\alpha \in K$ with $v_2(\alpha) > \frac{2}{2g+1}$.
  Then
  \[ f(\alpha) = \alpha^{2g+1} + h(\alpha)^2 \equiv h(\alpha)^2 \bmod 4\pi_K \,, \]
  where $\pi_K$ is a uniformizer of~$K$, so $f(\alpha)$ is a square in~$K$
  (note that $h(\alpha)$ is a unit), and there is a point $P = (\alpha, \beta) \in C(K)$.
  Then $\Tr_{K/\Q_2} i(P) \in J(\Q_2)$, and if $c$ is the characteristic polynomial
  of~$\alpha$, then $\delta_2(\Tr_{K/\Q_2} i(P)) = (-1)^{\deg c} c(\theta)$
  (we specify elements of~$H_2$ by representatives in $\Z_2[\theta]^\times$).

  Now consider $\alpha = 2^{1/d} \in \Q_2(2^{1/d})$ for $1 \le d \le g$.
  Then $v_2(\alpha) = \frac{1}{d} > \frac{2}{2g+1}$, so we obtain a point $Q_d \in J(\Q_2)$
  with
  \[ \delta_2(Q_d) = (-1)^d (\theta^d - 2) = (-\theta)^d (1 - 2(-\theta)^{-d}) \,. \]
  Since $-\theta = (\theta^{-g} h(\theta))^2$ is a square in~$\Z_2[\theta]$, this
  shows that $1 - 2(-\theta)^{-d}$ is in the image of~$\delta_2$. Similarly,
  taking $\alpha = 4^{1/d} \in \Q_2(2^{1/d})$ with $d$ odd and $d \le 2g-1$,
  we again have $v_2(\alpha) > \frac{2}{2g+1}$, and we find that $1 - 4(-\theta)^{-d}$
  is in the image of~$\delta_2$. This proves~\eqref{L:imlog2}.

  To see~\eqref{L:imlog3}, we recall that the image of~$\log$ is the $\Z_2$-span of the
  logarithms of elements of~$J(\Q_2)$ that give generators of~$J(\Q_2)/2J(\Q_2)$.
  By the reasoning above, such elements are given by divisors of the form
  \[ \sum_{k=0}^d (\zeta_d^k 2^{1/d}, *) - d \cdot \infty\,, \quad 1 \le d \le g, \qquad\text{and}\qquad
     \sum_{k=0}^d (\zeta_d^k 4^{1/d}, *) - d \cdot \infty\,, \quad d \in I \,.
  \]
  Here $\zeta_d$ denotes a primitive $d$th root of unity in~$\bar{\Q}_2$.
  The claim follows by noting that
  \[ \log [(\alpha, *) - \infty] = \sum_{n=1}^\infty \frac{\alpha^n}{n} \bfa_{n-1} \]
  when $v_2(\alpha) > \frac{2}{2g+1}$ and that
  $\Tr_{\Q_2(2^{1/d})/\Q_2} (2^{k/d}) = 0$ when $d \nmid k$ and $= d 2^{k/d}$ otherwise.
  (We use that $d$ is odd when $d \in I$, so that $x^d - 4$ is irreducible over~$\Q_2$.)
\end{proof}

Note that in~\eqref{L:imlog3} we can replace $I$ by any larger set contained
in~$\{1,3,\ldots,2g-1\}$, since we get the logarithm of some element of~$J(\Q_2)$
for any such~$d$. So to determine the image of~$\log$, we first compute
the $(2g \times g)$-matrix~$L$ whose rows are given by the individual logarithms specified in the
lemma (computed to sufficient precision), but with $I = \{1,3,5,\ldots,2g-1\}$.
Then we echelonize this matrix over~$\Z_2$, which gives us a $\Z_2$-basis
of the image. If the precision turns out to be insufficient for the echelonization
process, we have to recompute the logarithms to higher precision.
We obtain a matrix $U \in \GL(g, \Z[\frac{1}{2}])$ such that the rows of~$L U$
generate~$\Z_2^g$.
Then $\log' \colon Q \mapsto (\log Q) U$ is a logarithm on~$J(\Q_2)$
with image~$\Z_2^g$ (we consider $\log Q$ as a row vector here).

\begin{Remark}
  Lemma~\ref{L:imlog} shows, among other things, that $J(\Q_2)/2J(\Q_2)$ can be
  generated by classes represented by divisors $[D - (\deg D) \cdot \infty]$
  with $D$ reducing to a multiple of~$\bar{P}_0$. This can be seen independently
  as follows. Recall that $\tilde{J}(\F_2)$ has odd order. Let $Q \in J(\Q_2)$
  be arbitrary. Then an odd multiple of~$Q$ is in the kernel of reduction
  in~$J'(\Q_2(\pi))$, hence we can assume that $Q$ is in this kernel of reduction
  to begin with. We can represent $Q$ by a divisor of the form $D - (\deg D) \cdot P_0$
  with $\deg D \le g$. Since $\bar{P}_0$ is not a Weierstrass point, the reduction
  of~$D$, considered as a divisor on~$C'$, must be $(\deg D) \cdot \bar{P}_0$ (because
  $Q$ is in the kernel of reduction). We can add $(\deg D) \cdot [P_0 - \infty]$
  without changing the image mod~$2J(\Q_2)$, since $[P_0 - \infty]$ is a torsion
  point of odd order.
\end{Remark}

We now want to use the ``Selmer group Chabauty'' method~\cite{Stoll2017b}
to determine the rational points on~$C$.

Let $P \in C(\Q) \setminus C(\Q)_\odd$. Then $i(P)$ is not infinitely
$2$-divisible in~$J(\Q)$, so we can write $i(P) = 2^\nu Q$ with
$\nu \in \Z_{\ge 0}$ and $Q \in J(\Q) \setminus 2J(\Q)$. If we can show
that for all such $(P, \nu, Q)$ we have that $\delta_2(Q) \notin \im(\rho_2)$,
then it follows that $C(\Q) = C(\Q)_\odd = \{\infty, (0,1), (0,-1)\}$,
since by the diagram~\eqref{E:diagram}, $\delta_2(Q)$ must be in the image of~$\rho_2$.
We will in fact aim to prove the following stronger statement.

\begin{Claim} \label{Claim:goal}
  For all $P \in C(\Q_2) \setminus C(\Q)_\odd$, $\nu \in \Z_{\ge 0}$
  and $Q \in J(\Q_2) \setminus 2J(\Q_2)$ such that $i(P) = 2^\nu Q$,
  we have that $\delta_2(Q) \notin \im(\rho_2)$.
\end{Claim}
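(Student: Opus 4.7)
The plan is to prove Claim~\ref{Claim:goal} by contradiction, converting the hypothesis $\delta_2(Q) \in \im(\rho_2)$ into an explicit $2$-adic congruence for $\log i(P)$ and then refuting it on each residue disk of $C(\Q_2)$.

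Suppose $(P, \nu, Q)$ satisfies the hypothesis and additionally $s := \delta_2(Q) \in \rho_2(\Sel_2(J))$. Since $\delta_2$ factors through $J(\Q_2)/2J(\Q_2)$ and is injective there, $s$ is determined by a unique coset $Q_s + 2J(\Q_2)$ in $J(\Q_2)$, and $s \neq 0$ because $Q \notin 2J(\Q_2)$. By Assumption~(A3) the Selmer image sits injectively inside the explicit finite group $H_2$, and Lemma~\ref{L:imlog}\eqref{L:imlog2} provides concrete representatives $Q_s$ as Galois traces of points of the form $(\zeta_d^k 2^{1/d}, \ast)$ and $(\zeta_d^k 4^{1/d}, \ast)$ on~$C$, with logarithms computable to arbitrary $2$-adic precision via the series $\ell_j$ and $\tilde{\ell}_j$ of Lemma~\ref{L:diff}.

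Applying the renormalized logarithm $\log'$ (for which $\log'(J(\Q_2)) = \Z_2^g$) to $i(P) = 2^\nu Q$, and noting that two coset representatives differ by an element of $2J(\Q_2)$ whose $\log'$-image lies in $2\Z_2^g$, gives
\[
  \log' i(P) \;\equiv\; 2^\nu \log' Q_s \pmod{2^{\nu+1}\Z_2^g}.
\]
I would then decompose $C(\Q_2)$ into its finitely many residue disks (using the good-reduction model $C'$ of Lemma~\ref{L:potgoodred2}), and on each disk $D$ express each component of $\log' i$ as a convergent power series in a local parameter $t \in \Z_2$ whose coefficient valuations are controlled by Lemma~\ref{L:diff}. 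For each triple $(D, s, \nu)$ below an explicit Strassmann-type threshold, the congruence is checked directly on the finitely many relevant residues of~$t$; above the threshold, a nonvanishing Taylor coefficient of small enough valuation forces $v_2(\log' i(P))$ to fall outside the arithmetic progression allowed by the congruence.

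The main obstacle lies in the three residue disks containing the odd-torsion points $\infty$, $P_0$, $\iota(P_0)$, where $\log' i$ vanishes at $t=0$ and hence has no leading linear term from which to pin down $\nu$ by inspection. For these disks one must compare the valuations of the first nonvanishing Taylor coefficients of $\log' i$ against every target residue $2^\nu \log' Q_s \bmod 2^{\nu+1}$ as $\nu$ ranges over $\Z_{\ge 0}$ and $s$ ranges over the (nontrivial) Selmer image. Assumption~(A3) is decisive here: it makes the list of classes $s$ concrete and small, so that this comparison becomes a finite $2$-adic verification, to be carried out together with the routine analysis on the remaining disks.
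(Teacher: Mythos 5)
Your proposal follows, in substance, the paper's own strategy: Assumption~(A3) together with the injectivity of $\delta_2$ on $J(\Q_2)/2J(\Q_2)$ converts the hypothesis $\delta_2(Q)\in\im(\rho_2)$ into the condition $\log' Q\equiv\log' Q_s \pmod{2\Z_2^g}$ for one of the finitely many explicit classes supplied by Lemma~\ref{L:imlog}, and this is refuted disk by disk using the expansions of Lemma~\ref{L:diff}. Your congruence $\log' i(P)\equiv 2^\nu\log' Q_s\pmod{2^{\nu+1}\Z_2^g}$ is the paper's test $\rho\log' i(P)=\overline{\log' Q_s}$ in disguise: when $\overline{\log' Q_s}\neq 0$ the only admissible $\nu$ is $v_2(\log' i(P))$, so the apparent iteration over $\nu$ collapses and your ``Strassmann-type'' finite verification is exactly the computation of the finite sets $Z(P_0)$ and $Z(\infty)$. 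There are two points of genuine divergence. First, you locate the difficulty in the wrong place: the disks of $P_0$, $\iota(P_0)$ and $\infty$ are precisely where the logarithm method is designed to work (the linear Taylor coefficient determines $\rho\log'$ for $t$ deep enough in the disk, leaving finitely many residues of $t$ to check), whereas the delicate disk is the one with $x(P)\in\Z_2^\times$, which the paper does \emph{not} treat via the logarithm at all: there it computes $\delta_2(i(P))=(x(P)-\theta)$ directly and shows this class is constant on the disk (Lemma~\ref{L:v=0}). This matters because the $\log'$-based test only sees a Selmer class through the quotient of $J(\Q_2)/2J(\Q_2)$ by the classes of torsion points, a kernel of dimension $\dim_{\F_2}J(\Q_2)[2]$ (equal to $2$ in the $a_5$ example), so your uniform treatment is strictly weaker on that disk and may fail where the paper's check succeeds. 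Second, and relatedly, your congruence cannot be refuted for a nonzero Selmer class $s$ with $\overline{\log' Q_s}=0$: taking $\nu=0$, it is satisfied by every $P$ close enough to $P_0$. Hence success of the scheme presupposes that the Selmer image injects into $(\im\log')/2(\im\log')$; this hypothesis should be stated explicitly and verified in each application.
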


In the following, we will outline an algorithm that can be used
to prove Claim~\ref{Claim:goal}.

We first consider points $P \in C(\Q_2)$ such that $x(P) \in \Z_2^\times$.

\begin{Lemma} \label{L:v=0}
  Assume that $h(1)$ is even. Then
  all points $P \in C(\Q_2)$ such that $x(P) \in \Z_2^\times$ have
  $x(P) \equiv 1 - h(1)^2 \bmod 8$, and such points exist.
  If $\delta_2(i(P)) = (x(P) - \theta)$
  is not in the image of~$\rho_2$ for one of them, then $C(\Q)$
  does not contain points~$P$ such that $x(P) \in \Z_2^\times$.
\end{Lemma}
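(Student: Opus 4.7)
The plan is to address the three assertions (the congruence, the existence, and the final implication) in that order.

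For the congruence, if $P = (x,y) \in C(\Q_2)$ with $x \in \Z_2^\times$, then $x$ is odd, so $x^{2g+1} \equiv x \pmod 8$; since $h(1)$ is even, $h(x) \equiv h(1) \equiv 0 \pmod 2$ and $h(x)^2 \equiv 0 \pmod 4$. Thus $y^2 = x^{2g+1} + h(x)^2$ is odd, whence $y \in \Z_2^\times$ and $y^2 \equiv 1 \pmod 8$. Reducing the curve equation mod~$4$ then forces $x \equiv 1 \pmod 4$. Writing $h(x) - h(1) = (x - 1) h_1(x)$ with $h_1 \in \Z[x]$ and observing that $h(1)$ is even and $x - 1 \in 4\Z_2$, the cross term and the quadratic term in $h(x)^2 = h(1)^2 + 2 h(1)(x-1) h_1(x) + (x-1)^2 h_1(x)^2$ both lie in $16\Z_2$, so $h(x)^2 \equiv h(1)^2 \pmod 8$. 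Plugging back in gives $x \equiv 1 - h(1)^2 \pmod 8$. For existence, I will take $x_0 = 1 - h(1)^2$; the same computation run in reverse shows $f(x_0) \equiv 1 \pmod 8$, and since $2$-adic units $\equiv 1 \pmod 8$ are squares, there is a $y_0 \in \Z_2$ with $(x_0, y_0) \in C(\Q_2)$.

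For the final implication, the key is that $\delta_2(i(P)) = (x(P) - \theta) \in H_2$ takes the same value for all such $P$. If $P_1, P_2 \in C(\Q_2)$ have $x(P_i) \in \Z_2^\times$, the congruence just proved gives $x(P_1) - x(P_2) \in 8\Z_2$. Each $x(P_i) - \theta$ is a unit in $\Z_2[\theta]$ since its norm $f(x(P_i)) = y(P_i)^2$ lies in $\Z_2^\times$, so I can write $(x(P_1) - \theta)/(x(P_2) - \theta) = 1 + 8\beta$ with $\beta \in \Z_2[\theta]$; applying Lemma~\ref{L:H2}(1) with $\alpha = 2\beta$, whose reduction mod~$2$ vanishes and so has trace zero in every factor, shows $1 + 8\beta$ is a square in $\Z_2[\theta]^\times$. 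Hence $(x(P_1) - \theta)$ and $(x(P_2) - \theta)$ represent the same class in $H_2$, so it suffices to check the Selmer condition at any single such point. The diagram~\eqref{E:diagram} together with Assumption~\ref{A:Ass2} (injectivity of $\rho_2$ on $\Sel_2(J)$) then delivers the conclusion: a $\Q$-point with $x(P) \in \Z_2^\times$ would force the common class into $\im(\rho_2)$, contrary to hypothesis.

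I expect the main obstacle to be the $2$-adic bookkeeping in the first paragraph, particularly the identity $h(x)^2 \equiv h(1)^2 \pmod 8$ for $x \equiv 1 \pmod 4$, which relies crucially on $h(1)$ being even; everything else reduces to the squares-mod-$8$ description of $\Z_2^{\times 2}$ and to Lemma~\ref{L:H2}.
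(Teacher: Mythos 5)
Your proof is correct and follows essentially the same route as the paper: show that all points of $C(\Q_2)$ with unit $x$-coordinate are congruent to $1-h(1)^2$ mod~$8$ and that such points exist, deduce that they all share the same image under~$\delta_2$, and conclude via the commutative diagram~\eqref{E:diagram}. The only cosmetic differences are that the paper obtains the mod-$8$ congruence from the fact that $f'(a)$ is a $2$-adic unit (a Hensel-type increment argument) rather than by your direct expansion of $x^{2g+1}$ and $h(x)^2$, and it checks constancy of the square class componentwise in the unramified factors of $\Q_2[\theta]$ rather than via Lemma~\ref{L:H2}(1); note also that Assumption~\ref{A:Ass2} is not actually needed for this step, only the commutativity of the diagram.
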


\begin{proof}
  We first note that $f'(a) = (2g+1) a^{2g} + 2 h(a) h'(a) \in \Z_2^\times$ when
  $a \in \Z_2^\times$, which implies that $f(a + 2^m k) \equiv f(a) + 2^m k \bmod 2^{m+1}$
  for $k \in \Z_2$.

  Since $h(1)$ is even by assumption, we have $f(1) = 1 + h(1)^2 \in 1 + 4\Z$.
  Then $f(1 - h(1)^2) \equiv 1 \bmod 8$,
  hence there is $P \in C(\Q_2)$ with $x(P) = 1 - h(1)^2$.

  If $P' \in C(\Q_2)$ is any other point such that $x(P') \in \Z_2^\times$,
  write $x(P') = x(P) + 2k$ with $k \in \Z_2$. Then $f(x(P')) \equiv 1 + 2k \bmod 4$,
  so $k$ must be even for $f(x(P'))$ to be a square. We can then write
  $x(P') = 1 + 4k$ with $k \in \Z_2$. Then $f(x(P')) \equiv 1 + 4k \bmod 8$,
  so $k$ must again be even, and we have that $x(P') \equiv x(P) \bmod 8$.

  Write $f = f_1 \cdots f_m$ with the $f_j$ the monic irreducible factors
  of~$f$ in~$\Q_2[x]$. Then $\Q_2[\theta]$ is isomorphic to the product of
  the $m$ (unramified) extensions of~$\Q_2$ obtained by adjoining a root of $f_1, \ldots, f_m$.
  Since $f(1)$ is odd, $f_j(1) \in \Z_2^\times$ for all~$j$, hence $x(P) - \theta$ is a $2$-adic unit
  in each component of this product.
  Then $x(P') \equiv x(P) \bmod 8$ implies that the square classes of
  $x(P') - \theta$ and $x(P) - \theta$ are the same, and so $\delta_2(i(P')) = \delta_2(i(P))$.
  Since by assumption, $\delta_2(i(P)) \notin \im(\rho_2)$, none of these points
  can be in~$C(\Q)$.
\end{proof}

\begin{Remark}
  If $h(1)$ is odd (hence $f(1)$ is even), dealing with points~$P$
  such that $x(P) \in \Z_2^\times$ can be more involved, depending
  on the precise behavior of~$h$. We do not discuss this here.
\end{Remark}

It remains to consider points whose $x$-coordinate has either strictly positive
or strictly negative $2$-adic valuation. We consider rational points~$P$
with $v_2(x(P)) > 0$ first. We can assume that $P$ is $2$-adically closer
to~$P_0$ than to~$\iota(P_0)$. Then $\log' i(P) = (\int_{P_0}^P \bfomega) \cdot U$,
and we can express the right hand side as a vector of power series in
the uniformizer $t = x$ at~$P_0$.
Define $\rho \colon \Z_2^g \setminus \{0\} \to \F_2^g \setminus \{0\}$,
$\bfa \mapsto \overline{2^{-v_2(\bfa)} \bfa}$. Note that when $i(P) = 2^\nu Q$
with $Q \notin 2 J(\Q_2)$, then $\overline{\log' Q} = \rho \log' i(P)$.
So we need to determine the set
\[ Z(P_0) := \{\rho \log' i(\tau, *) : 0 \neq \tau \in 2 \Z_2\} \subset \F_2^g \,. \]
This is a finite
problem, since $\rho$ is locally constant and $\rho \log' i(\tau, *)$
is given by the reduction of the coefficient of~$t$ in the series vector
whenever the valuation of~$\tau$ is large enough for the first term in
the series to dominate all others. We then check that $Z(P_0)$ has empty
intersection with the image of the Selmer group. If this is the case,
then $Q \notin J(\Q)$ and therefore $P \notin C(\Q)$. (See also~\cite{PoonenStoll2014},
where this kind of argument is used to show that most odd degree
hyperelliptic curves have the point at infinity as their only rational point.)

When $v_2(x(P)) < 0$, then $P$ is in the residue disk of~$\infty$.
We express $\log' i(P) = (\int_\infty^P \bfomega) \cdot U$ as power
series in $t = y/x^{g+1}$, which is a uniformizer at~$\infty$. The series
giving~$\bfomega$ involve only even powers of~$t$, so no loss of precision
is introduced by the formal integral. We then compute $Z(\infty)$ in an
analogous way as we did~$Z(P_0)$ and check that it does not meet the image
of the Selmer group.

We have implemented this procedure in Magma~\cite{Magma}.
This implementation is available at~\cite{programs}*{SelChabDyn.magma}.


\section{An application to arithmetic dynamics} \label{S:dyn}

As an application of the method described above, we show that the curve
\begin{align*}
  C \colon y^2 = a_5(x) &=
         x^{15} + x^{14} + 2 x^{13} + 5 x^{12} + 14 x^{11} + 26 x^{10} + 44 x^9 + 69 x^8 \\
    & \qquad {} + 94 x^7 + 114 x^6 + 116 x^5 + 94 x^4 + 60 x^3 + 28 x^2 + 8 x + 1
\end{align*}
of genus~$7$ has exactly the three rational points $\infty$, $(0,1)$ and~$(0,-1)$.
As usual, we write $J$ for its Jacobian variety. Since $a_5(x) = x^{15} + a_4(x)^2$
with $\deg a_4 = 7$, $a_4(0) = 1$ and $a_4(1) = 26$, Assumptions~\ref{A:Ass1}
and the assumption in Lemma~\ref{L:v=0}
are satisfied. We can also check that $a_5$ is irreducible and that
\[ \disc(a_5) = 13 \cdot 24554691821639909 \]
is squarefree. Let $\theta$ be a root of~$a_5$ and let $K = \Q(\theta)$.
Then the ring of integers of~$K$ is $\CO_K = \Z[\theta]$. One checks (using
Magma~\cite{Magma}, say) that $\CO_K$ has trivial narrow class group (the Minkowski
bound is less than~$10^4$, so this can easily be done unconditionally).
This implies that both the usual class group and the narrow class group
of~$\CO_K[\frac{1}{2}]$ are trivial, as they both are quotients of the
narrow class group of~$\CO_K$.
So Assumption~\ref{A:Ass2} is also satisfied by Remark~\ref{R:Cl}.

Over~$\F_2$, $a_5$ splits into a product of three irreducible factors of degree~$5$.
So $A \otimes \Q_2$ is a product of three copies of the unramified extension
of~$\Q_2$ of degree~$5$. In particular, $\dim J(\Q_2)[2] = 2$.
We set up~$H_2$ and compute the image of~$J(\Q_2)$ under~$\delta_2$
using Lemma~\ref{L:imlog}. The set~$I$ can be taken to be~$\{3, 5\}$.
We intersect this image with the image of~$\widetilde{\Sel}$, which is the
subgroup of the unit group~$\CO_K^\times$ consisting of units with norm~$1$
whose images under the real embeddings corresponding to the two largest
roots of~$a_5$ have the same sign.
This intersection, which is isomorphic to the $2$-Selmer group, has
dimension~$2$. This latter fact was already observed in~\cite{DHJMS}.
We will identify the Selmer group with its image in~$H_2$.
Since there is no rational $2$-torsion, this implies that the rank
of~$J(\Q)$ is at most~$2$. However, we were unable to find any point
of infinite order in~$J(\Q)$.
So we cannot use the standard Chabauty method to determine~$C(\Q)$.

Points $P \in \C(\Q_2)$ with $x(P) \in \Z_2^\times$ have $x(P) \equiv -3 \bmod 8$.
We check that the image of $-3-\theta$ in~$H_2$ is not in the Selmer group.
This already shows that there are no rational points on~$C$ whose $x$-coordinate
is a $2$-adic unit; see Lemma~\ref{L:v=0}.

We then compute the image of~$\log$, again using Lemma~\ref{L:imlog}.
The transformation matrix~$U$ can be taken to be
\[ U = \begin{pmatrix}
         1/2 &   0 &   0 &   0 &   0 &    0 &    0 \\
           0 & 1/2 &   0 &   0 &   0 &    0 &    0 \\
           0 &   0 & 1/2 &   0 &   0 & -1/4 &    0 \\
           0 &   0 &   0 & 1/2 &   0 &    0 &    0 \\
           0 &   0 &   0 &   0 & 1/2 &    0 & -1/4 \\
           0 &   0 &   0 &   0 &   0 &  1/4 &    0 \\
           0 &   0 &   0 &   0 &   0 &    0 &  1/4
       \end{pmatrix} .
\]
In terms of the corresponding basis of $\F_2^g = (\im \log')/2(\im \log')$,
the image of the Selmer group is generated by
$(0, 1, 0, 0, 0, 1, 0)$ and $(0, 0, 1, 1, 1, 1, 0)$.

Then $\log' i(2t, *)$ is given by the vector of power series
\begin{align*}
  \bigl(&t - 2^2 t^2 + 2^2 t^4 + O(2^3), t^2 + 2^2 t^4 + O(2^3), 2^2 t^3 + O(2^3), \\
        &2 t^4 + O(2^3), O(2^3), 5 \cdot 2 t^3 + 2^2 t^4 + O(2^3), O(2^3)\bigr) \,.
\end{align*}
For odd~$t$, this gives $\rho \log' i(2t,*) = (1, 1, 0, 0, 0, 0, 0)$.
For even~$t$, the linear term is dominant and gives $\rho \log' i(2t,*) = (1, 0, 0, 0, 0, 0, 0)$.
Both these vectors are not in the image of the Selmer group.

For points near~$\infty$, the corresponding series are
\[ \bigl(O(2^2), O(2^2), O(2^2), O(2^2), O(2^2), O(2^2), -t + 2 t^2 + O(2^2)\bigr) \,. \]
So $\rho \log' i(P(2t)) = (0, 0, 0, 0, 0, 0, 1)$ for all $0 \neq t \in \Z_2$,
which is also not in the image of the Selmer group.

We see that the method is successful and shows that there are no unexpected rational
points on~$C$. Since $A_5(x) = x^{16} a_5(1/x)$, this proves the following.

\begin{Proposition} \label{P:a5}
  The only rational number~$c$ such that $A_5(c)$ is a square in~$\Q$ is $c=0$.
\end{Proposition}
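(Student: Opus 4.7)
The plan is to translate the claim into a statement about rational points on the genus-$7$ hyperelliptic curve $C: y^2 = a_5(x)$ and then apply the Selmer group Chabauty machinery of Section~\ref{S:method}. From the identity $A_5(c) = c^{16} a_5(c^{-1})$, any rational $c \neq 0$ with $A_5(c) = y^2$ in $\Q$ would produce a rational point $(c^{-1}, y c^{-8})$ on $C$ with non-zero finite $x$-coordinate, while $c = 0$ is handled separately by $A_5(0) = 0$. Hence it suffices to show $C(\Q) = \{\infty, (0,1), (0,-1)\}$. Taking $g = 7$ and $h = a_4$, the hypotheses to verify are $\deg a_4 = 7 = g$ and $a_4(0) = 1$ odd (so Assumptions~\ref{A:Ass1} hold), together with $a_4(1) = 26$ even (so Lemma~\ref{L:v=0} applies). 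To secure Assumption~\ref{A:Ass2} via Remark~\ref{R:Cl}, I would check irreducibility of $a_5$, squarefreeness of $\disc(a_5) = 13 \cdot 24554691821639909$, and triviality of the narrow class group of $\CO_K = \Z[\theta]$, where $\theta$ is a root of $a_5$ and $K = \Q(\theta)$; the last is unconditional since the Minkowski bound of $K$ is below $10^4$.

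I would then set up $H_2$ using the factorization of $a_5$ over $\F_2$ (three irreducible quintics), compute the image of $\delta_2 \colon J(\Q_2) \to H_2$ by Lemma~\ref{L:imlog}, and compute $\widetilde{\Sel} \subset H$ as the subgroup of norm-$1$ units in $\CO_K^\times$ with the appropriate sign conditions at the two largest real roots of $a_5$. The $2$-Selmer group is the intersection of these two images in $H_2$, and it should come out $2$-dimensional. In particular $\rk J(\Q) \leq 2 < g$, but since no point of infinite order in $J(\Q)$ appears to be available, classical Chabauty cannot be used, and the Selmer group variant is genuinely needed.

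The substantive step is verifying Claim~\ref{Claim:goal}. I would split $C(\Q_2) \setminus C(\Q)_\odd$ according to $v_2(x(P))$. For $v_2(x(P)) = 0$, Lemma~\ref{L:v=0} reduces matters to a single linear-algebra check: the class of $-3 - \theta$ is not in $\rho_2(\Sel_2(J))$. For the residue disks at $P_0$ (where $v_2(x(P)) > 0$) and at $\infty$ (where $v_2(x(P)) < 0$), I would use Lemma~\ref{L:imlog} to produce a generating set for $\log(J(\Q_2))$, choose a transformation $U$ so that $\log' = (\log)\,U$ is surjective onto $\Z_2^g$, and expand $\log' i(P)$ as a vector of power series in the uniformizer $x$ at $P_0$ or $y/x^{g+1}$ at $\infty$. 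From these expansions I would read off the finite sets $Z(P_0), Z(\infty) \subset \F_2^g$ of possible reduction vectors of $\rho \log' Q$ for $i(P) = 2^\nu Q$ with $Q \notin 2J(\Q_2)$, and verify that each is disjoint from the image of the Selmer group.

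The main obstacle is the $2$-adic precision bookkeeping: Lemma~\ref{L:diff} provides only modest lower bounds on coefficient valuations, so one must carry each $\log'$-series to enough terms that the leading term dominates uniformly across its residue subdisk, and one must echelonize the logarithm matrix over $\Z_2$ to enough accuracy to pin down $U$ correctly. Both computations are finite but delicate; they would be carried out in Magma via the implementation~\cite{programs}*{SelChabDyn.magma}. Once $Z(P_0)$ and $Z(\infty)$ are shown to miss $\rho_2(\Sel_2(J))$, the commutativity of diagram~\eqref{E:diagram} forces $C(\Q) = C(\Q)_\odd$, which by Lemma~\ref{L:odd} equals $\{\infty, P_0, \iota(P_0)\} = \{\infty, (0,1), (0,-1)\}$, and the proposition follows.
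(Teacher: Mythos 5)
Your proposal is correct and follows essentially the same route as the paper: reduce to determining $C(\Q)$ for $y^2=a_5(x)$, verify Assumptions~\ref{A:Ass1} and~\ref{A:Ass2} (the latter via Remark~\ref{R:Cl} and the unconditional triviality of the narrow class group), compute the $2$-dimensional Selmer group inside~$H_2$, handle unit $x$-coordinates by Lemma~\ref{L:v=0}, and show $Z(P_0)$ and $Z(\infty)$ miss the Selmer image via the $\log'$ series. The only difference is that the paper records the explicit outputs of the Magma computation (the matrix~$U$, the series expansions, and the resulting reduction vectors) that your plan defers to the implementation.
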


We now discuss how to deal with $A_n(x) = y^2$ for composite $n \ge 6$.
We begin with an auxiliary result.

\begin{Lemma} \label{L:rigdiv}
  There is a sequence $(B_n)_{n \ge 1}$ of monic polynomials $B_n \in \Z[c]$
  such that $A_n(c) = \prod_{d \mid n} B_d(c)$ for all $n \ge 1$. If $m \neq n$,
  then the resultant of $B_m$ and~$B_n$ is~$\pm 1$.
\end{Lemma}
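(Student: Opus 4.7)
The plan has two parts: construct the $B_n$ by induction using squarefreeness of $A_n$, then establish the resultant property through a key polynomial identity for $A_m/A_e$.

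\textbf{Existence of the $B_n$.} First I would show $A_n$ is squarefree in $\Z[c]$: differentiating $A_n = A_{n-1}^2 + c$ gives $A_n'(c) = 2 A_{n-1}(c) A_{n-1}'(c) + 1 \equiv 1 \pmod{2}$, so $A_n$ is already squarefree modulo $2$. Over $\bar{\Q}$ each root $\zeta$ of $A_n$ therefore has a well-defined exact period $e \mid n$, namely the smallest positive integer with $A_e(\zeta) = 0$, equivalently the exact period of $0$ under $f_\zeta$. Let $B_n \in \Q[c]$ be the monic polynomial whose roots are the $\zeta$ of exact period $n$; this root set is Galois-stable, so $B_n$ has rational coefficients, and $A_n = \prod_{d \mid n} B_d$ as monic polynomials in $\Q[c]$. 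Inductively assuming $B_d \in \Z[c]$ monic for $d < n$, the product $P := \prod_{d \mid n,\, d < n} B_d$ is monic in $\Z[c]$, and polynomial division of the monic integer polynomial $A_n$ by $P$ yields $B_n = A_n/P \in \Z[c]$.

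\textbf{Key identity.} For the resultant claim I would prove: if $e \mid m$ and $e < m$, then $A_m/A_e \in \Z[c]$ evaluates to $1$ at every root $\bar{\zeta}$ of $A_e$ in $\overline{\F}_p$, for every prime $p$. Writing $F_k(y, c) := f_c^{\circ k}(y)$, the relation $f_c^{\circ m}(0) = f_c^{\circ(m-e)}(f_c^{\circ e}(0))$ gives $A_m(c) = F_{m-e}(A_e(c), c)$. A chain-rule computation yields $(\partial F_k/\partial y)(0, c) = 2^k A_0(c) A_1(c) \cdots A_{k-1}(c)$, which vanishes for $k \ge 1$ because $A_0 = 0$. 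Hence $F_k(y, c) - A_k(c)$ vanishes to order at least $2$ in $y$ at $y = 0$: $F_k(y, c) = A_k(c) + y^2 H_k(y, c)$ for some $H_k \in \Z[y, c]$. Substituting $y = A_e(c)$ gives
\[
A_m(c) = A_{m-e}(c) + A_e(c)^2 \cdot H_{m-e}(A_e(c), c).
\]
Since $e \mid m - e$ implies $A_e \mid A_{m-e}$, dividing by $A_e$ yields $A_m/A_e = A_{m-e}/A_e + A_e \cdot H_{m-e}(A_e, c)$, and evaluating at $\bar{\zeta}$ kills the second term, giving $(A_m/A_e)(\bar{\zeta}) = (A_{m-e}/A_e)(\bar{\zeta})$. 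Iterating down the arithmetic progression $m, m-e, m-2e, \ldots$ terminates at $A_e/A_e = 1$.

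\textbf{Conclusion.} Suppose $\bar{\zeta} \in \overline{\F}_p$ were a common root of $B_m$ and $B_n$ with $m \neq n$. Since $A_m(\bar{\zeta}) = A_n(\bar{\zeta}) = 0$, the orbit of $0$ under $x \mapsto x^2 + \bar{\zeta}$ has a well-defined exact period $e$ dividing $\gcd(m, n)$; in particular $e \le \min(m, n) < \max(m, n)$. After possibly swapping the roles of $m$ and $n$, we may assume $e < m$ and $e \mid m$. The factorization $A_m/A_e = \prod_{d \mid m,\, d \nmid e} B_d$ contains $B_m$ as a factor, so $B_m(\bar{\zeta}) = 0$ forces $(A_m/A_e)(\bar{\zeta}) = 0$, contradicting the key identity. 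Thus $\Res(B_m, B_n)$ is a nonzero integer with no prime divisor, hence equals $\pm 1$. The main technical point is the key identity; everything else is either direct (squarefreeness by reduction mod $2$) or formal (integrality by division by monic polynomials, and the final extraction of $e$ as the exact period).
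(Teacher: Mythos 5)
Your proof is correct, and it rests on the same underlying mechanism as the paper's — rigid divisibility of the sequence $(A_n)$ at every prime — but it is considerably more self-contained. The paper defines $B_n$ directly by M\"obius inversion, $B_n = \prod_{d \mid n} A_d^{\mu(n/d)}$, and then outsources both the integrality of~$B_n$ and the coprimality of the reductions $\bar{B}_m, \bar{B}_n$ over~$\F_p$ to the proof of Lemma~1.1(b) of~\cite{Stoll1992}, remarking only that the argument given there for the integer values $A_n(c)$, $c \in \Z$, carries over verbatim to the PID $\F_p[c]$. The content of that cited argument is exactly your key identity: $f_c^{\circ k}(y) = A_k(c) + y^2 H_k(y,c)$, hence $A_m \equiv A_{m-e} \pmod{A_e^2}$ for $e \mid m$, which is what forces $(A_m/A_e)(\bar{\zeta}) = 1$ at every root $\bar{\zeta}$ of~$A_e$ in $\overline{\F}_p$; so you have effectively inlined the citation. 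Your alternative construction of $B_n$ via the exact periods of the roots (which requires the squarefreeness of $A_n$, neatly obtained from $A_n' \equiv 1 \bmod 2$) yields the same polynomials as the M\"obius product and makes the disjointness of the root sets of $B_m$ and $B_n$ over $\bar{\Q}$ — hence the nonvanishing of the resultant — transparent, at the modest extra cost of having to argue separately that $B_n$ is rational and then integral. Both routes conclude identically: since $B_m$ and $B_n$ are monic, the resultant reduces mod~$p$ to the resultant of the reductions, so a nonzero integer resultant that is nonzero modulo every prime must be~$\pm 1$.
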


\begin{proof}
  We define $B_n = \prod_{d \mid n} A_n^{\mu(n/d)} \in \Q(c)$, where $\mu$ is
  the M\"obius function. Then the relation between the $A_n$'s and
  the~$B_n$'s is satisfied. The proof of Lemma~1.1(b) in~\cite{Stoll1992},
  which gives a similar result for the values $A_n(c)$ when $c$ is an integer,
  works in the same way over~$\F_p[c]$ in place of~$\Z$ (it uses that we
  have a PID with finite residue fields). It shows that the reduction mod~$p$
  of~$B_n$ is a (monic) polynomial and that the reductions of $B_m$ and~$B_n$
  are coprime when $m \neq n$. Since this holds for every prime~$p$, the
  first shows that $B_n \in \Z[c]$. The second means that the
  resultant is not divisible by~$p$ for any~$p$, so
  the resultant must be a unit in~$\Z$.
\end{proof}


From the recurrence defining~$A_n$, we find that
\[ A_n(0) = 0\,, \qquad
   A_n(-1) = \begin{cases} -1, & \text{$n$ odd} \\ \hfill 0, & \text{$n$ even} \end{cases}
   \quad\text{and}\quad
   A_n(-2) = \begin{cases} -2, & n = 1\,, \\ \hfill 2, & n > 1\,. \end{cases}
\]
We also find that $A'_n(0) = 1$, $A'_n(-1) = (-1)^{n-1}$. This implies that
\[ B_n(0) = \begin{cases} 0, & n = 1, \\ 1, & n > 1, \end{cases} \quad\!
   B_n(-1) = \begin{cases} -1, & n = 1, \\
                     \hfill 0, & n = 2, \\
                     \hfill 1, & n > 2,
             \end{cases} \quad\!
   B_n(-2) = \begin{cases} -2, & n = 1, \\
                           -1, & \text{$n > 1$ squarefree,} \\
                     \hfill 1, & \text{else.}
             \end{cases}
\]

\begin{Lemma} \label{L:AmAn}
  Assume that $m \ge 2$ divides~$n$ and that $m$ is even when $n$ is even.
  If $c \in \Q$ is such that $A_n(c)$ is a square in~$\Q$, then $A_m(c)$ is
  also a square in~$\Q$.
\end{Lemma}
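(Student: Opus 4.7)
The plan is to check, place by place on $\Q$, that $A_m(c)$ lies in the trivial class of $\Q^\times / \Q^{\times 2}$ whenever $A_n(c)$ does. Write
\[ A_n(c) = A_m(c) \cdot Q(c), \qquad Q(c) = \prod_{d \mid n,\; d \nmid m} B_d(c) \in \Z[c]. \]
By Lemma~\ref{L:rigdiv} the polynomials $B_d$ are pairwise coprime in $\Z[c]$, giving a B\'ezout relation $U(c) B_d(c) + V(c) B_e(c) = \pm 1$ with $U, V \in \Z[c]$ for each $d \neq e$. Specialising at a rational $c$ with $v_p(c) \ge 0$, this prevents both $v_p(B_d(c))$ and $v_p(B_e(c))$ from being positive.

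For the valuations at a finite prime $p$, there are two cases. If $v_p(c) \ge 0$, coprimality identifies at most one divisor $d^* \mid n$ with $v_p(B_{d^*}(c)) > 0$, whence $v_p(A_n(c)) = v_p(B_{d^*}(c))$; and $v_p(A_m(c))$ equals the same value if $d^* \mid m$ and is $0$ otherwise, so evenness passes from $A_n(c)$ to $A_m(c)$. If $v_p(c) < 0$, the leading term of the monic polynomial $A_k$ of degree $2^{k-1}$ dominates, so $v_p(A_k(c)) = 2^{k-1} v_p(c)$ is automatically even for any $k \ge 2$. Hence all finite-place valuations of $A_m(c)$ are even.

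It remains to show $A_m(c) \ge 0$. If $c \ge 0$ this is immediate by induction in $A_{k+1} = A_k^2 + c$. If $-1 \le c < 0$, a short induction on the same recurrence establishes $|A_k(c)| < \sqrt{-c}$, hence $A_k(c) < 0$, for every $k \ge 1$; so $A_n(c)$ can be a square only if it vanishes, which occurs only at $c = -1$ with $n$ even, and the parity hypothesis then forces $m$ to be even and $A_m(-1) = 0$. For $c < -1$ the parity hypothesis becomes essential: at $c \approx -1.3$ one has $A_6(c) > 0$ while $A_3(c) < 0$, showing that the excluded pair $(n,m) = (6,3)$ would indeed fail. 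For the admissible cases I plan to exploit the identity
\[ A_{j+k}(c) - A_j(c) = A_k(c)^2 \cdot g_j\bigl(A_k(c)^2, c\bigr), \]
which follows from $f_c^{\circ k}$ being a polynomial in $y^2$ and $c$, together with a case split on the parities of $m$ and $n/m$.

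The finite-place argument is routine once Lemma~\ref{L:rigdiv} is in hand; the real difficulty is the sign analysis for $c < -1$. The parity hypothesis is precisely calibrated to rule out configurations like $(6,3)$ above, and converting the numerical picture into a rigorous case analysis --- by induction on the divisor structure of $n/m$ and careful use of the dynamics of $f_c$ --- will be the most delicate part of the proof.
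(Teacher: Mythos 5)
Your treatment of the finite places is sound and is essentially the paper's resultant argument in valuation-theoretic form: both yield that $A_m(c)$ is $\pm$ a rational square. The gap is the sign. You reduce to showing $A_m(c)\ge 0$ and propose to do this by real dynamics (the identity $A_{j+k}-A_j=A_k^2\,g_j(A_k^2,c)$ plus a parity case split), deferring the case $c<-1$ as ``the most delicate part''. This is not a deferrable technicality: the real inequality you would need is false for admissible pairs. Take $(n,m)=(9,3)$, which satisfies the hypotheses ($n$ odd), and $c=-167/100$. Then $A_3(c)\approx -0.418<0$ while $A_9(c)\approx 1.042>0$, so ``$A_n(c)\ge 0\Rightarrow A_m(c)\ge 0$'' fails over~$\R$ (your example $(6,3)$ at $c\approx -1.3$ is excluded by the parity hypothesis, but this one is not). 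Hence no argument using only the real dynamics of~$f_c$ --- which is all your plan invokes --- can pin down the sign of~$A_m(c)$; in the window $-2<c<-1$ the sign pattern of the critical orbit is essentially arbitrary.

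The paper circumvents this by determining the square class of the cofactor $B=\prod_{d\mid n,\ d\nmid m}B_d$ at the prime~$3$ rather than the sign of~$A_m(c)$ at the real place: since all relevant $d$ are $>2$, one has $B(0)=B(-1)=B(-2)=1$, and $B$ is monic of even degree, so $B(c)$ is a nonzero square in~$\Q_3$ for every $c\in\Q$. Coprimality (resultant $\pm 1$) plus $A_n(c)$ being a square forces $A_m(c)$ and~$B(c)$ to be either both squares or both negatives of squares in~$\Q$; since $-1$ is not a square in~$\Q_3$, the second alternative is impossible, and $A_m(c)$ is a square. To repair your write-up you should replace the archimedean sign analysis by this (or an equivalent) non-archimedean argument; as it stands, the proof is incomplete and its proposed completion cannot succeed.
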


\begin{proof}
  We write $A_n = A_m B$ where $B$ is the product of the~$B_d$ with $d$
  a divisor of~$n$ that is not a divisor of~$m$. Our assumptions imply
  that all these $d$ are $> 2$. This shows that $B(0) = B(-1) = 1$
  and also $B(-2) = A_n(-2)/A_m(-2) = 1$. In addition, $B$ is monic of even degree.
  Together, these imply that $B(c)$ is a nonzero $3$-adic square.

  When $A_m(c) = 0$, the claim is trivially true. Otherwise,
  since the resultant of $A_m$ and~$B$ is~$\pm 1$ by Lemma~\ref{L:rigdiv}
  and the multiplicativity of the resultant, and since both $A_m$ and~$B$ have even degree,
  either both $A_m(c)$ and~$B(c)$ are squares or both their negatives are squares.
  Since $B(c)$ is a $3$-adic unit that is a square, its negative is a non-square
  and it it follows that $B(c)$ and~$A_m(c)$ must both be squares.
\end{proof}

\begin{Corollary} \label{C:reduce} \strut
  \begin{enumerate}[\upshape(1)]\addtolength{\itemsep}{6pt}
    \item Assume that $n$ is odd, $p$ is a prime divisor of~$n$ and that
          $c \in \Q$ is such that $A_n(c)$ is a square. Then $A_p(c)$ is also a square.
    \item Assume that $4 \mid n$. Then the only $c \in \Q$ such that $A_n(c)$ is a square
          are $c = 0$ and $c = -1$.
    \item Assume that $n = 2m$ with $m$ odd. If $A_n(c)$ is a square and $p$
          is a prime divisor of~$m$, then $A_{p}(c)$ is a square or $-A_p(c)$ is a square.
          Also, $A_2(c) = c(c+1)$ is a square.
  \end{enumerate}
\end{Corollary}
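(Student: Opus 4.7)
The plan is to derive all three parts from Lemma~\ref{L:AmAn}, together with Lemma~\ref{L:rigdiv} and the earlier-cited fact that $A_4(c)$ is a square only for $c \in \{0,-1\}$.

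For part~(1), since $n$ is odd the parity hypothesis of Lemma~\ref{L:AmAn} is vacuous, so the lemma applies with $m = p$ and gives directly that $A_p(c)$ is a square. For part~(2), both $4$ and $n$ are even with $4 \mid n$, so Lemma~\ref{L:AmAn} applies with $m = 4$ to yield that $A_4(c)$ is a square; combined with the input from~\cite{DHJMS} recalled in the introduction, this forces $c \in \{0,-1\}$.

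For part~(3), I apply Lemma~\ref{L:AmAn} twice. First with $m = 2$ (both $2$ and $n$ even, $2 \mid n$) to conclude that $A_2(c) = c(c+1)$ is a square. Second with $m' = 2p$: the divisibility $p \mid m$ yields $2p \mid n$, and both $2p$ and $n$ are even, so the lemma gives that $A_{2p}(c)$ is a square. I then factor $A_{2p} = A_p \cdot B$ where $B := B_2 B_{2p}$; this is valid because for the odd prime $p$ the divisors of $2p$ are exactly $\{1,2,p,2p\}$ while the divisors of $p$ are $\{1,p\}$. By Lemma~\ref{L:rigdiv} and multiplicativity of the resultant, $\Res(A_p, B) = \pm 1$, and both factors are monic of even degree (immediate from $\deg A_k = 2^{k-1}$ and $p \ge 3$). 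Then, exactly as in the last step of the proof of Lemma~\ref{L:AmAn}, from $A_p(c) B(c) = A_{2p}(c)$ being a rational square I conclude that $A_p(c)$ and $B(c)$ are either both squares or both negatives of squares; in particular $A_p(c)$ or $-A_p(c)$ is a square.

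The only subtle point is the last resultant-plus-sign step in part~(3), but this is precisely what is already carried out inside the proof of Lemma~\ref{L:AmAn}. Unlike the lemma itself, I do not need to pin down the sign of $B(c)$ via a $3$-adic computation, because the statement of~(3) already allows either sign on $A_p(c)$; this actually makes part~(3) slightly easier than Lemma~\ref{L:AmAn}. Once the sign-matching step is granted, the three parts of the corollary follow formally.
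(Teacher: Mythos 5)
Your proof is correct and follows essentially the same route as the paper: parts (1) and (2) are verbatim the paper's argument (take $m=p$, resp.\ $m=4$, in Lemma~\ref{L:AmAn} and quote the $A_4$ result), and the second half of (3) (taking $m=2$) is too. For the first statement of (3), the paper runs the resultant/sign argument directly on the factorization $A_n = A_p \cdot \prod_{d \mid n,\, d \nmid p} B_d$, whereas you first invoke Lemma~\ref{L:AmAn} with $m = 2p$ and then argue on $A_{2p} = A_p \cdot B_2 B_{2p}$; this is a harmless reorganization that isolates the case $n = 2p$, and the ingredients you cite (Lemma~\ref{L:rigdiv}, multiplicativity of the resultant, monicity and evenness of the degrees) are exactly those the paper uses. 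One small point, which the paper's own proof glosses over in the same way: the sign-matching step needs $B(c) \neq 0$, and your $B$ contains the factor $B_2(c) = c+1$, which vanishes at $c = -1$; but there $A_p(-1) = -1$, so $-A_p(-1)$ is a square and the conclusion still holds. Apart from this shared tacit case, the argument is complete.
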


\begin{proof} \strut
  \begin{enumerate}[\upshape(1)]\addtolength{\itemsep}{6pt}
    \item Take $m = p$ in Lemma~\ref{L:AmAn}.
    \item Take $m = 4$ in Lemma~\ref{L:AmAn} and use that $A_4(c)$ is a
          square only for $c = 0, -1$; see~\cite{DHJMS}.
    \item The first statement follows as in the proof of Lemma~\ref{L:AmAn};
          the difference is that in this case, we cannot show that $B(c)$ must
          be a square. For the second statement, take $m = 2$ in Lemma~\ref{L:AmAn}.
    \qedhere
  \end{enumerate}
\end{proof}

Since $A_3(c)$ and $A_5(c)$ is a square only for $c = 0$ and $A_4(c)$
is a square only for $c = 0$ and $c = -1$, this already implies that
$A_n(c)$ cannot be a square when $c \neq 0, -1$ and $n$ is an odd multiple
of $3$ or~$5$, or a multiple of~$4$. We now consider~$A_6$.

\begin{Proposition}
  Let $c \in \Q$ be such that $A_6(c)$ is a square. Then $c = 0$ or $c = -1$.
\end{Proposition}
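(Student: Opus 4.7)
The plan is to invoke Corollary~\ref{C:reduce}(3) with $n = 6$ and $p = 3$: if $A_6(c)$ is a square, then either (A)~$A_3(c)$ is a square, or (B)~$-A_3(c)$ is a square, and in both cases $A_2(c) = c(c+1)$ is also a square. Subcase~(A) forces $c = 0$ by the results of~\cite{DHJMS}, so all of the real work lies in Subcase~(B) with $c \neq 0$.

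In Subcase~(B), the conic $z^2 = c(c+1)$ admits the standard rational parametrization by lines through the origin: every rational point with $c \neq 0$ arises from a unique $t \in \Q \cup \{\infty\}$ via $c = 1/(t^2-1)$, $z = t/(t^2-1)$. Substituting this into $y^2 = -A_3(c) = -c(c^3+2c^2+c+1)$ and setting $Y = y(t^2-1)^2$, a routine expansion yields
\begin{equation*}
  C' \colon Y^2 = -t^6 + 2t^4 - 3t^2 + 1\,.
\end{equation*}
Writing $s = t^2$, the cubic $-s^3 + 2s^2 - 3s + 1$ has discriminant~$-23$ and does not vanish at $s = 0$, so the sextic is squarefree and $C'$ is smooth of genus~$2$. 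The only visible rational points on~$C'$ are $(0, \pm 1)$, which correspond to $c = -1$ under the parametrization; the two points at infinity on $C'$ are not rational since the leading coefficient $-1$ is not a square in~$\Q$.

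What remains is to show that $(0, \pm 1)$ are the only rational points on~$C'$, for which I will use standard techniques for rational points on genus~$2$ curves. The involution $t \mapsto -t$ makes $C'$ bielliptic, so its Jacobian is $\Q$-isogenous to a product $E_1 \times E_2$ of elliptic curves, realized by the quotient maps $(t, Y) \mapsto (t^2, Y)$ and $(t, Y) \mapsto (t^2, tY)$ respectively. A short Weierstrass-form calculation shows both factors have minimal discriminant~$-23$; since there is only one $\Q$-isogeny class of elliptic curves of conductor~$23$ and that class has rank~$0$, it follows that the Mordell-Weil group of the Jacobian of~$C'$ is finite. The rational points on~$C'$ can then be determined by a small finite search inside this torsion group, and I expect the outcome to be exactly $\{(0, 1), (0, -1)\}$.

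The main obstacle is the rank computation for the two elliptic factors, which underpins everything that follows; once it is in hand, the rest of the argument reduces to routine torsion enumeration. Pulling the conclusion back through the parametrization of the conic gives $c = -1$ in Subcase~(B), and combined with Subcase~(A) this proves the proposition.
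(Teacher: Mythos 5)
Your reduction is exactly the paper's: Corollary~\ref{C:reduce}(3) with $p=3$, the parametrization $c = 1/(t^2-1)$ of $A_2(c)=\square$, and the resulting genus-$2$ curve $C'\colon u^2 = -t^6+2t^4-3t^2+1$ all match. The endgame, however, contains a fatal error: the Jacobian of $C'$ does \emph{not} have rank $0$, so no torsion enumeration can finish the argument. Concretely, on the quotient $E_1 \colon u^2 = f(s)$ with $f(s)=-s^3+2s^2-3s+1$ (equivalently $u^2 = s^3+2s^2+3s+1$ after $s\mapsto -s$), the image of $(0,1)$ doubles to $(1/4,\mp 11/8)$; this point lies in the formal group at~$2$ but is not $2$-torsion (its $y$-coordinate is nonzero), hence has infinite order. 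So $E_1$ has rank at least~$1$, and correspondingly the class $[(0,1)-(0,-1)]$ has infinite order on $\operatorname{Jac}(C')$ --- which is precisely the point of infinite order the paper exhibits. Your supporting numerology is also off: $-23$ is the discriminant of the cubic $f$, not the minimal discriminant of either elliptic factor, and in fact there are \emph{no} elliptic curves of conductor~$23$ at all ($J_0(23)$ is a simple abelian surface), so the statement ``the unique isogeny class of conductor $23$ has rank $0$'' has no content.

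The paper's actual proof of this step is: a $2$-descent on the genus-$2$ Jacobian (as in \cite{Stoll2001}) bounds the rank by~$1$; the point $[(0,1)-(0,-1)]$ shows the rank is exactly~$1$; and since $1 < 2 = g$, Magma's \texttt{Chabauty} routine (Chabauty's method combined with the Mordell--Weil sieve, \cite{BruinStoll2010}) determines $C'(\Q) = \{(0,\pm 1)\}$, giving $c=-1$. If you want to salvage a bielliptic-style argument, you would have to work with the \emph{other} quotient $w^2 = s\,f(s)$ (via $(t,u)\mapsto(t^2,tu)$) and hope it has rank~$0$ with controllable torsion, or else carry out elliptic-curve Chabauty on~$E_1$; the claim as written, that both factors have rank~$0$, is provably false.
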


\begin{proof}
  By the corollary above, $A_2(c)$ is a square. This implies
  that $c = 1/(t^2-1)$ for some $t \in \Q$. We also
  know that $c = 0$ when $A_3(c)$ is a square.
  So we can assume that $-A_3(c)$ is a square. Substituting
  $c = 1/(t^2-1)$ into $-A_3(c) = y^2$ and writing $u = y (t^2-1)^2$,
  we obtain the equation
  \[ u^2 = -t^6 + 2 t^4 - 3 t^2 + 1 \,; \]
  this defines a curve of genus~$2$ over~$\Q$.
  A $2$-descent on its Jacobian as implemented in Magma and described
  in~\cite{Stoll2001} shows that its Mordell-Weil rank is at most~$1$.
  The difference of the two rational points on the curve with $t$-coordinate
  zero is a point of infinite order on the Jacobian. Applying Magma's
  \texttt{Chabauty} function (based on~\cite{BruinStoll2010}*{Section~4.4})
  to it shows that these two are the only rational points on the curve.
  They correspond to $c = -1$; this completes the proof.
\end{proof}

This implies that $A_n(c)$ is never a square for $c \neq 0, -1$
when $n$ is any multiple of~$3$.

We now consider $A_7$. We will work with the curve $y^2 = a_7(x)$ instead,
which is isomorphic to $y^2 = A_7(x)$. When $n$ is odd, $a_{n-1}(0) = 1$
and $a_{n-1}(-1) = 0$, so the assumptions that $h(0)$ is odd and $h(1)$ is
even are always satisfied for $f(x) = a_n(x) = x^{2g+1} + a_{n-1}^2$
with $g = 2^{n-2}-1$. The discriminant of~$a_7$ is squarefree,
\begin{align*}
  \disc(a_7) &= 8291 \cdot 9137 \cdot 420221 \cdot 189946395389 \cdot 4813162343551332730513 \\
             &{}\qquad \cdot 2837919018511214750008829 \\
             &{}\qquad \cdot 1858730157152877176856713108209153714699601\,,
\end{align*}
and $a_7$ is irreducible. Let $K = \Q(\theta)$ with $\theta$ a root of~$a_7$.
Assuming GRH, we can verify with Magma that $K$ has trivial class group.
(The Minkowski bound is way too large to make an unconditional computation
of the class group feasible. Under GRH, it takes about two and a half hours
on the author's current laptop.)
So the $2$-Selmer group is contained in the units modulo squares. We can then get
generators of the unit group quickly as power products on a factor base of
elements of~$K$ (using Magma's \texttt{SUnitGroup} with the optional parameter
\texttt{Raw}). This allows us to compute the signs of these units in the nine
different real embeddings of~$K$ and thus to find the subgroup of the group
of units modulo squares consisting of elements whose sign vectors are in
the image of~$J(\R)$ under the local descent map~$\delta_\infty$. This subgroup~$U$
has dimension $35 + 1 - 5 = 31$ as an $\F_2$-vector space (the unit rank
is~$35$, as there are $9$ real embeddings and $27$~pairs of complex embeddings
of~$K$; we add~$1$ for the torsion subgroup, and the local conditions at
infinity cut out a subspace of codimension~$5$). We then compute the image
of~$\delta_2$ as in Lemma~\ref{L:imlog} and the map $\rho_2 \colon U \to H_2$
and find that $\rho_2$ is injective and that its image intersects the
image of~$\delta_2$ trivially. This means that the $2$-Selmer group, which
is isomorphic to this intersection, is trivial, so the three obvious points
are the only rational points on $y^2 = a_7(x)$. This implies the following.

\begin{Proposition}
  If $K$ as above has odd class number (which is true assuming GRH),
  then the only rational number~$c$ such that $A_7(c)$ is a square in~$\Q$ is $c=0$.
\end{Proposition}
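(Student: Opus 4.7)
The plan is to apply the Selmer group Chabauty framework of Section~\ref{S:method} to the curve $C \colon y^2 = a_7(x)$ of genus $g = 31$, but to show that in fact the Chabauty half of that framework is not needed: it will suffice to prove $\Sel_2(J) = 0$. Once this vanishing is established, the standard 2-descent exact sequence forces $J(\Q)$ to be finite of odd order, Lemma~\ref{L:odd} gives $C(\Q) = \{\infty, P_0, \iota(P_0)\}$, and the isomorphism between $y^2 = a_7(x)$ and $y^2 = A_7(c)$ translates this into the proposition. No logarithm computation, no search for points of infinite order on $J$, and no analysis of the local sets $Z(P_0)$, $Z(\infty)$ will be required.

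First I would verify the hypotheses of the method. Since $a_6(0) = 1$ is odd, Assumption~\ref{A:Ass1} holds; the recursion $a_{n+1}(1) = 1 + a_n(1)^2$ shows by induction that $a_6(1)$ is even, so the hypothesis of Lemma~\ref{L:v=0} also holds. Given the stated irreducibility of $a_7$ and squarefreeness of $\disc(a_7)$, Remark~\ref{R:Cl} reduces Assumption~\ref{A:Ass2} to the oddness of the class number of $K = \Q(\theta)$ (with $\theta$ a root of $a_7$), together with an injectivity statement that will be verified in the course of the Selmer computation itself. Establishing that $\Cl(K)$ is trivial is the main obstacle: the Minkowski bound of $K$ is far too large for an unconditional verification, which is precisely why GRH enters the hypothesis of the proposition.

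Next I would compute the ambient descent group $\widetilde{\Sel}$ from Remark~\ref{R:largerSel}. Under the assumptions above, this is the subgroup of $\mathcal{O}_K^\times/\mathcal{O}_K^{\times 2}$ consisting of elements with trivial norm in $\Q^\times/\Q^{\times 2}$ whose sign vectors at the real embeddings of $K$ lie in the image of the local descent map at the archimedean place. Since $K$ has $9$ real and $27$ pairs of complex embeddings, the unit rank is $35$; the torsion unit contributes $1$, and the sign condition cuts out a subspace of codimension $5$, so one expects $\dim_{\F_2} \widetilde{\Sel} = 35 + 1 - 5 = 31$. To keep arithmetic manageable in a degree-$63$ field, I would represent units as power products over a precomputed factor base, via \texttt{SUnitGroup} with the \texttt{Raw} option in Magma.

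Finally, assemble $H_2$ and compute $\im(\delta_2) \subset H_2$ using Lemma~\ref{L:imlog}, which requires only the factorization of $a_7$ modulo $2$ (from which $\dim J(\Q_2)[2]$ and the index set $I$ are read off); then compute $\rho_2$ on an $\F_2$-basis of $\widetilde{\Sel}$. If $\rho_2|_{\widetilde{\Sel}}$ turns out to be injective (confirming Assumption~\ref{A:Ass2}) and $\rho_2(\widetilde{\Sel}) \cap \im(\delta_2) = 0$ in $H_2$, then by the discussion after Remark~\ref{R:Cl} the group $\Sel_2(J)$ is isomorphic to this intersection and hence trivial, completing the proof. The remaining hurdles are entirely computational: the GRH-conditional class group calculation and linear algebra over $\F_2$ in spaces of dimension controlled by $\deg a_7 = 63$; no conceptual input beyond Section~\ref{S:method} should be needed.
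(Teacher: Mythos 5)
Your proposal is correct and follows essentially the same route as the paper: verify the assumptions, use the (GRH-conditional) triviality of $\Cl(K)$ to reduce $\widetilde{\Sel}$ to the $31$-dimensional group of units with the right archimedean sign conditions, compute $\im(\delta_2)$ via Lemma~\ref{L:imlog}, and conclude $\Sel_2(J)=0$, whence $J(\Q)$ is finite of odd order and $C(\Q)=C(\Q)_{\odd}$. The paper likewise dispenses with the Chabauty/logarithm step for $a_7$ for exactly this reason.
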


We can also deal with $n = 10$.

\begin{Proposition}
  Let $c \in \Q$ be such that $A_{10}(c)$ is a square. Then $c = 0$ or $c = -1$.
\end{Proposition}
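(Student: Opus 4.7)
The plan is to combine Corollary~\ref{C:reduce}(3), Proposition~\ref{P:a5}, and a second application of the Selmer group Chabauty method, this time to the quadratic twist by $-1$ of the $A_5$ curve. Assume $A_{10}(c) = y^2$ for some $y \in \Q$. By Corollary~\ref{C:reduce}(3) with $n = 10$, $m = 5$, $p = 5$, either $A_5(c)$ or $-A_5(c)$ is a rational square. Proposition~\ref{P:a5} handles the first alternative, forcing $c = 0$. So the task reduces to proving that $-A_5(c) = y^2$ has only the rational solutions $c = 0$ and $c = -1$.

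For this remaining case, I would apply the Selmer group Chabauty apparatus of Section~\ref{S:method} to the genus-$7$ curve $D\colon Y^2 = -a_5(X)$ obtained from $y^2 = -A_5(c)$ by moving the Weierstrass point $(0,0)$ to infinity via $X = 1/c$, $Y = y/c^8$. The three known rational points of $D$ are $\infty$ (from $c = 0$) and $(-1, \pm 1)$ (from $c = -1$, using $a_5(-1) = -1$). The curve $D$ is the quadratic twist by $-1$ of the $A_5$ curve, and its defining polynomial differs from the form considered in Section~\ref{S:method} only by an overall sign. Its potentially good reduction at~$2$ (the analogue of Lemma~\ref{L:potgoodred2}) can be witnessed over the extension $\Q_2(\pi, i)$ with $\pi^{15} = 2$ and $i^2 = -1$, via $Y = 2\eta + i \cdot a_4(\pi^2 \xi)$ and $X = \pi^2 \xi$; a short computation produces the equation $\eta^2 + i\, a_4(\pi^2 \xi) \eta + \xi^{15} = 0$, which has the same smooth reduction $\eta^2 + \eta = \xi^{15}$ over~$\F_2$ as in the $A_5$ case (noting that $i$ and $a_4(0)$ both reduce to $1$ modulo the maximal ideal). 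The rest of the infrastructure---the computation of $H_2$, the image of $\delta_2$, and the local logarithm images $Z(P_0)$ and $Z(\infty)$---adapts formally, the main distinction being that the $2$-Selmer group now sits inside $K^\times/K^{\times 2}$ subject to a norm condition reflecting the twist by $-1$. The number field $K = \Q(\theta)$ and its trivial narrow class group are identical to the $A_5$ case, so Assumption~\ref{A:Ass2} carries over.

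The main obstacle is the concrete execution of the adapted algorithm: computing the twisted $2$-Selmer group, the image of $\delta_2 \colon J_D(\Q_2) \to H_2$, and the local logarithm images $Z(P_0) \cup Z(\infty) \subset \F_2^g$, and then verifying that the Selmer image and these local images meet only in the three expected residue classes. Provided this computation succeeds as it did for the $A_5$ curve, one concludes $D(\Q) = \{\infty, (-1, \pm 1)\}$, and hence $c \in \{0, -1\}$, completing the proof.
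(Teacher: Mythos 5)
Your proposal follows the paper's proof essentially exactly: both reduce via Corollary~\ref{C:reduce}(3) and Proposition~\ref{P:a5} to showing that $-A_5(c)$ is a square only for $c\in\{0,-1\}$, and both then run the Selmer group Chabauty machinery on the quadratic twist by $-1$ of the $a_5$ curve, working over $\Q_2(i)$ (and $\Q_2(\pi,i)$) to exploit the same good reduction and to compute the logarithms. The only (cosmetic) difference is that the paper substitutes $x\mapsto -x$ to get the monic model $y^2=x^{15}-a_4(-x)^2$ with known points at $x=1$, whereas you keep the non-monic model $Y^2=-a_5(X)$ with points at $X=-1$; as in the paper, the argument ultimately rests on the explicit computation (Selmer group of dimension~$3$, generators of $J'(\Q_2)/2J'(\Q_2)$, and the local images at the residue disks), which you correctly identify but defer.
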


\begin{proof}
  Assume that $A_{10}(c)$ is a square. By Corollary~\ref{C:reduce}, it follows
  that $A_5(c)$ is a square or $-A_5(c)$ is a square. In the first case,
  $c = 0$ by Proposition~\ref{P:a5}. So we consider the second case. If $-A_5(c)$
  is a square, then there is a rational point with $x$-coordinate~$-1/c$ on
  the curve
  \[ C' \colon y^2 = -a_5(-x) = x^{15} - a_4(-x)^2 \,. \]
  This curve is isomorphic to the quadratic twist by~$-1$ of the curve~$C$
  discussed earlier; in particular, $C$ and~$C'$ are isomorphic over~$\Q(i)$.
  We use this isomorphism for the computation of logarithms.

  First, we compute the $2$-Selmer group of the Jacobian~$J'$ of~$C'$, which
  turns out to have dimension~$3$. The map~$\sigma$ is injective as for~$J$
  (in both cases, it is the restriction of the canonical map from the units
  modulo squares in~$K$ to the local counterpart at~$2$). By searching, we find
  that the points in~$J'(\Q_2)$ whose $a$-polynomials in the Mumford representation
  are in the list given below give a basis of~$J'(\Q_2)/2J'(\Q_2)$.
  \begin{gather*}
    x^2 + 2 x + 2\,, \quad x^4 + 2 x^3 + 2\,, \quad x^4 + 2 x^2 + 2\,, \quad
    x^6 + 2 x^3 + 2\,, \quad x^6 + 2 x^5 + 2 x^4 + 2 x^3 + 2\,, \\
    x - 1\,, \quad x^2 + x + 1\,, \quad x^3 - 2 x^2 + 3 x - 1\,, \quad
    x^6 + 2 x^5 - 2 x^3 - 3 x - 3
  \end{gather*}
  The first five of these have roots~$\alpha$ satisfying $v_2(\alpha) \ge 1/6$.
  Their logarithms can be computed using the expansion around~$(0, i)$
  in a similar way as for~$C$. (The polynomials split into two factors
  over~$\Q_2(i)$; one has to take the difference of the logarithms computed
  for the two factors to make sure the same base-point is used.)
  The remaining four have roots that are $2$-adic units. They fall into
  the domain of convergence of the logarithm expansions around~$\infty$
  in terms of $t = (y + i h(x))/x^8$ (with $h(x) = a_4(-x)$).
  As a consistency check, the logarithms we obtain are indeed in~$\Q_2$
  (to the target precision), even though the computation involves~$\Q_2(i)$
  in a nontrivial way. We then proceed as for~$C$. We note that there are
  no rational (not even $2$-adic) points~$P$ on~$C'$ with $v_2(x(P)) > 0$,
  since for even~$x$, we have that $-a_5(-x) \equiv -1 \bmod 8$.
  To deal with points that have $v_2(x(P)) = 0$, we expand the logarithms
  at~$(1,1)$ in terms of $t = x-1$. It is easy to check that $v_2(t)$
  must be at least~$3$ to give a $\Q_2$-point on the curve. For such~$t$,
  we always get the same image under~$\rho$, which is not in the image
  of the Selmer group. For points with $v_2(x(P)) < 0$, we proceed in the
  same way as for~$C$; again, the only image under~$\rho$ that occurs
  is not in the image of the Selmer group. We conclude that
  $C'(\Q) = \{\infty, (1,1), (1,-1)\}$, which means that $c \in \{-1,0\}$.
\end{proof}

We conclude that for $c \in \Q \setminus \{0, -1\}$, $A_n(c)$ is not a
square in~$\Q$ when $n$ is a multiple of $3$, $4$ or~$5$ or (assuming GRH)
an odd multiple of~$7$. This implies that when $f^{\circ 2}_c$ is irreducible,
then so is $f^{\circ 6}_c$, and under GRH, $f^{\circ 10}_c$ is irreducible
as well. This proves Theorem~\ref{T:appl}.

We remark that it appears to be hopeless to use the method presented here
to show that $A_{11}(c)$ is never a square for $0 \neq c \in \Q$, as this
would require showing that the class number of the number field of degree~$1023$
given by adjoining a root of~$a_{11}$ (which is irreducible) to~$\Q$ is odd.

A Magma program verifying the computational claims of this section is
available at the author's website~\cite{programs}*{SelChabDyn-examples.magma}.
It uses the implementation of the algorithm described in Section~\ref{S:method}.


\section{Further examples} \label{S:examples}

We take $h(x) = x + 1$ and consider
\[ C_g \colon y^2 = x^{2g+1} + (x+1)^2 \]
for $g \ge 1$. Then Assumptions~\ref{A:Ass1} are satisfied.
We ran our algorithm for $g = 1, 2,\ldots, 12$.
In each case, Assumption~\ref{A:Ass2} is also satisfied via Remark~\ref{R:Cl}
(and $h(1) = 2$ is even), so that our
method can be used. The following table summarizes the results;
the entries in the second row give the $\F_2$-dimension of the $2$-Selmer group,
and a $+$ or~$-$ in the third row says whether the method was successful or not
in showing that $C_g(\Q) = C_g(\Q)_{\odd}$.

\[ \renewcommand{\arraystretch}{1.2}
  \begin{array}{|r||*{12}{c|}} \hline
                   g & 1 & 2 & 3 & 4 & 5 & 6 & 7 & 8 & 9 & 10 & 11 & 12 \\\hline
           \dim \Sel & 0 & 1 & 0 & 1 & 0 & 2 & 1 & 1 & 2 &  1 &  0 &  2 \\\hline
     \text{success?} & + & - & + & + & + & + & + & + & + &  + &  + &  + \\\hline
  \end{array}
\]

For $g = 2$, we are not successful, and indeed, $C_2(\Q)$ contains further
points with $x$-coordinate~$12$. Since the Selmer group is one-dimensional,
the Mordell-Weil rank is~$1$, with $[(12, 499)] - \infty]$ a rational point
of infinite order on the Jacobian. A combination of Chabauty's method with
the Mordell-Weil Sieve, as implemented in Magma, allows us to determine~$C_2(\Q)$;
see~\cite{BruinStoll2010}*{Section~4.4}. We obtain the following result.

\begin{Proposition}
  Let $g \in \{1,2,\ldots,12\}$ and let $C_g \colon x^{2g+1} + (x+1)^2$.
  If $g \neq 2$, then $C_g(\Q) = \{\infty, (0,1), (0,-1)\}$, whereas
  \[ C_2(\Q) = \{\infty, (0,1), (0,-1), (12, 499), (12, -499)\} \,. \]
\end{Proposition}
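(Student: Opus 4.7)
The plan is to dispatch the twelve curves $C_g$ individually. For the eleven values $g \in \{1,3,4,5,6,7,8,9,10,11,12\}$ the approach is to run the Selmer-group-Chabauty algorithm of Section~\ref{S:method}. The hypotheses are verified uniformly: $h(x) = x+1$ satisfies $\deg h = 1 \le g$ and $h(0) = 1$ odd, giving Assumptions~\ref{A:Ass1}; $h(1) = 2$ is even, so Lemma~\ref{L:v=0} applies; and Assumption~\ref{A:Ass2} is validated via Remark~\ref{R:Cl} by checking in Magma, for each $g$, that $\disc(f_g)$ is squarefree and that the narrow class group of $\Z[\tfrac12,\theta_g]$ has odd order, where $\theta_g$ is a root of $f_g(x) = x^{2g+1} + (x+1)^2$. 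The algorithm then computes $\Sel_2(J_g) \hookrightarrow H_2$, the logarithm lattice via Lemma~\ref{L:imlog}, and the sets $Z(P_0)$, $Z(\infty)$, together with the unit-$x$ contribution from Lemma~\ref{L:v=0}. The recorded entries in the table assert that for each of these eleven values the union of images has empty intersection with $\rho_2(\Sel_2(J_g))$, so the argument following Claim~\ref{Claim:goal}, combined with Lemma~\ref{L:odd}, yields $C_g(\Q) = C_g(\Q)_\odd = \{\infty, (0,1), (0,-1)\}$.

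For $g = 2$ the algorithm fails, reflecting the existence of the extra points $(12, \pm 499)$. Here the plan is to fall back on an ordinary Chabauty--Coleman argument combined with the Mordell-Weil Sieve. The Selmer computation still gives $\rk J_2(\Q) \le 1$. To show the rank is exactly~$1$, note that by Lemma~\ref{L:order} the class $[(12,499) - \infty]$, if torsion of order $n > 1$, would require $n = 2$ or $n \ge 2g+1 = 5$; since $(12,499)$ is not a Weierstrass point, a reduction modulo a small prime of good reduction eliminates all such $n$, so this class is of infinite order. With $\rk J_2(\Q) = 1 < 2 = g$, classical Chabauty applies, and the Mordell-Weil Sieve as implemented in Magma (following~\cite{BruinStoll2010}*{Section~4.4}) then cuts out exactly $\{\infty, (0, \pm 1), (12, \pm 499)\}$ as $C_2(\Q)$.

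The principal obstacle is computational rather than conceptual: for $g$ near~$12$ the relevant number fields have degree $2g+1$ up to~$25$, and verifying squarefreeness of $\disc(f_g)$ together with odd narrow class number becomes the dominant cost. The Minkowski bounds here are still within reach unconditionally in this range (unlike the degree-$127$ situation encountered for $a_7$ in Section~\ref{S:dyn}, which required GRH), but they make the class-group verification the real work of the proof. Once those verifications are in hand, the remaining algorithmic steps—echelonizing the logarithm matrix, computing $Z(P_0)$ and $Z(\infty)$, and testing intersection with the Selmer image—are routine applications of the procedure of Section~\ref{S:method}.
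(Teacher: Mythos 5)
Your proposal matches the paper's proof: for $g \neq 2$ the Selmer-group-Chabauty algorithm of Section~\ref{S:method} is run (with Assumptions~\ref{A:Ass1}, \ref{A:Ass2} checked via Remark~\ref{R:Cl} and $h(1)=2$ even) and succeeds, while for $g=2$ one falls back on rank~$1$, the point $[(12,499)-\infty]$ of infinite order, and classical Chabauty combined with the Mordell--Weil sieve as in \cite{BruinStoll2010}. Your added justification that this class has infinite order (via Lemma~\ref{L:order} and reduction modulo a small prime) is a correct elaboration of a step the paper only asserts.
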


We note that $C_3$ is the curve $y^2 = a_3(x)$, where $a_n$ is defined as
in the preceding section.

We have also run the algorithm systematically for several values of the genus~$g$
with all polynomials $h \in \Z[x]$ of degree~$\le g$, with coefficients bounded in
absolute value by a quantity depending on~$g$ and such that $h(0)$ is odd and $h(1)$
is even. We can restrict to polynomials with positive leading coefficient.
We did the computations assuming GRH to speed up the determination
of the class groups. The results are summarized in the tables below.

The curves are partitioned according to the dimension of the $2$-Selmer group
and sorted into three buckets, according to whether our algorithm was successful
in proving that there are only the three obvious rational points, the curve
has more than these three rational points (up to a naive $x$-coordinate height
of $4 \log 10$), or the algorithm was unsuccessful, but no additional points
were found. We also give the average size of the Selmer group. We note that
the total average is reasonably close to~$3$, which is consistent with the
result of~\cite{BG2013}, even though their result does not apply to our restricted
family of curves.

For curves with an additional pair of points, one would expect an average of~$6$,
which is consistent with the trend as $g$ grows in our tables. It is to be expected,
and also visible in the data, that the algorithm is more likely to be successful
when the Selmer group is small. The increasing success rate as the genus grows
is consistent with the result of~\cite{PoonenStoll2014}, which says that
for not too small genus, the method will be successful most of the time in
showing that a general hyperelliptic curve of odd degree has the point at
infinity  as its only rational point, with the probability of failure of the
order of~$g 2^{-g}$.

\bigskip

\textbf{Genus~2}, coefficients bounded by~$25$.
\[ \renewcommand{\arraystretch}{1.2}
  \begin{array}{|r||*{6}{c|}|c||c|c|} \hline
           \dim \Sel & 0 & 1 & 2 & 3 & 4 & 5 & \text{avg.~$\#\Sel$}
                     & \text{total} & \text{perc.} \\ \hline \hline
      \text{success} & 4208 & 520 & 0 & 0 & 0 & 0 & 1.110 & 4728 & 28.0\% \\ \hline
    \text{more pts.} & 0 & 1137 & 1390 & 468 & 36 & 2 & 4.028 & 3033 & 17.9\% \\ \hline
      \text{failure} & 0 & 5827 & 2881 & 414 & 18 & 0 & 2.930 & 9140 & 54.1\% \\ \hline\hline
        \text{total} & 4208 & 7481 & 4271 & 882 & 54 & 2 & 2.618 & 16900 &
100.0\% \\ \hline
  \end{array}
\]

\bigskip

\textbf{Genus~3}, coefficients bounded by~$9$.
\[ \renewcommand{\arraystretch}{1.2}
  \begin{array}{|r||*{6}{c|}|c||c|c|} \hline
           \dim \Sel & 0 & 1 & 2 & 3 & 4 & 5 & \text{avg.~$\#\Sel$}
                     & \text{total} & \text{perc.} \\ \hline \hline
      \text{success} & 4003 & 3056 & 217 & 0 & 0 & 0 & 1.510 & 7276 & 42.4\% \\ \hline
    \text{more pts.} & 0 & 760 & 1150 & 569 & 59 & 2 & 4.599 & 2540 & 14.8\% \\ \hline
      \text{failure} & 0 & 3702 & 3036 & 556 & 37 & 1 & 3.358 & 7332 & 42.8\% \\ \hline\hline
        \text{total} & 4003 & 7518 & 4403 & 1125 & 96 & 3 & 2.757 & 17150 & 100.0\% \\ \hline
  \end{array}
\]

\bigskip

\textbf{Genus~4}, coefficients bounded by~$5$.
\[ \renewcommand{\arraystretch}{1.2}
  \begin{array}{|r||*{6}{c|}|c||c|c|} \hline
           \dim \Sel & 0 & 1 & 2 & 3 & 4 & 5 & \text{avg.~$\#\Sel$}
                     & \text{total} & \text{perc.} \\ \hline \hline
      \text{success} & 4806 & 6486 & 1931 & 88 & 0 & 0 & 1.969 & 13311 & 60.6\% \\ \hline
    \text{more pts.} & 0 & 731 & 1218 & 666 & 120 & 9 & 5.055 & 2744 & 12.5\% \\ \hline
      \text{failure} & 0 & 2212 & 2753 & 848 & 91 & 1 & 4.015 & 5905 & 26.9\% \\ \hline\hline
        \text{total} & 4806 & 9429 & 5902 & 1602 & 211 & 10 & 2.905 & 21960 & 100.0\% \\ \hline
  \end{array}
\]

\bigskip

\textbf{Genus~5}, coefficients bounded by~$3$.
\[ \renewcommand{\arraystretch}{1.2}
  \begin{array}{|r||*{6}{c|}|c||c|c|} \hline
           \dim \Sel &    0 &    1 &    2 &    3 &   4 & 5 & \text{avg.~$\#\Sel$}
                     & \text{total} & \text{perc.} \\ \hline \hline
      \text{success} & 3307 & 5786 & 2553 &  309 &   7 & 0 & 2.314 & 11962 &  71.2\% \\ \hline
    \text{more pts.} &    0 &  693 & 1204 &  644 & 133 & 6 & 5.102 &  2680 &  15.9\% \\ \hline
      \text{failure} &    0 &  668 & 1004 &  436 &  57 & 1 & 4.517 &  2166 &  12.9\% \\ \hline\hline
        \text{total} & 3307 & 7147 & 4761 & 1389 & 197 & 7 & 3.042 & 16808 & 100.0\% \\ \hline
  \end{array}
\]


\bigskip

\textbf{Genus~6}, coefficients bounded by~$3$.
\[ \renewcommand{\arraystretch}{1.2}
  \begin{array}{|r||*{7}{c|}|c||c|c|} \hline
           \dim \Sel & 0 & 1 & 2 & 3 & 4 & 5 & 6 & \text{avg.~$\#\Sel$}
                     & \text{total} & \text{perc.} \\ \hline \hline
      \text{success} & 22091 & 41357 & 22667 & 4370 & 255 & 4 & 0 & 2.586 & 90744 & 77.1\% \\ \hline
    \text{more pts.} & 0 & 3783 & 7629 & 4539 & 1116 & 109 & 7 & 5.598 & 17183 & 14.6\% \\ \hline
      \text{failure} & 0 & 2449 & 4393 & 2399 & 446 & 28 & 0 & 5.115 & 9715 & \;\;8.3\% \\ \hline\hline
        \text{total} & 22091 & 47589 & 34689 & 11308 & 1817 & 141 & 7 & 3.234 & 117648 & 100.0\% \\ \hline
  \end{array}
\]

\bigskip

We also ran our algorithm for $1000$ randomly chosen polynomials~$h$ with
coefficients bounded by~$10$ in the case $g = 7$.
\[ \renewcommand{\arraystretch}{1.2}
  \begin{array}{|r||*{6}{c|}|c||c|r|} \hline
           \dim \Sel & 0 & 1 & 2 & 3 & 4 & 5 & \text{avg.~$\#\Sel$}
                     & \text{total} & \text{perc.} \\ \hline \hline
      \text{success} & 199 & 389 & 252 & 44 & 6 & 1 & 2.767 & 891 & 89.1\% \\ \hline
    \text{more pts.} & 0 & 6 & 15 & 13 & 2 & 0 & 5.778 & 36 & 3.6\% \\ \hline
      \text{failure} & 0 & 9 & 40 & 19 & 5 & 0 & 5.616 & 73 & 7.3\% \\ \hline\hline
        \text{total} & 199 & 404 & 307 & 76 & 13 & 1 & 3.083 & 1000 & 100.0\% \\ \hline
  \end{array}
\]


\begin{bibdiv}
\begin{biblist}

\bib{BG2013}{article}{
   author={Bhargava, Manjul},
   author={Gross, Benedict H.},
   title={The average size of the 2-Selmer group of Jacobians of
   hyperelliptic curves having a rational Weierstrass point},
   conference={
      title={Automorphic representations and $L$-functions},
   },
   book={
      series={Tata Inst. Fundam. Res. Stud. Math.},
      volume={22},
      publisher={Tata Inst. Fund. Res., Mumbai},
   },
   date={2013},
   pages={23--91},
   review={\MR{3156850}},
}

\bib{Magma}{article}{
   author={Bosma, Wieb},
   author={Cannon, John},
   author={Playoust, Catherine},
   title={The Magma algebra system. I. The user language},
   note={Computational algebra and number theory (London, 1993)},
   journal={J. Symbolic Comput.},
   volume={24},
   date={1997},
   number={3-4},
   pages={235--265},
   issn={0747-7171},
   review={\MR{1484478}},
   doi={10.1006/jsco.1996.0125},
}

\bib{BruinStoll2010}{article}{
   author={Bruin, Nils},
   author={Stoll, Michael},
   title={The Mordell-Weil sieve: proving non-existence of rational points on curves},
   journal={LMS J. Comput. Math.},
   volume={13},
   date={2010},
   pages={272--306},
   issn={1461-1570},
   review={\MR{2685127}},
   doi={10.1112/S1461157009000187},
}

\bib{DHJMS}{misc}{
   author={DeMark, David},
   author={Hindes, Wade},
   author={Jones, Rafe},
   author={Misplon, Moses},
   author={Stoneman, Michael},
   title={Eventually stable quadratic polynomials over $\Q$},
   date={2019-02-25},
   note={Preprint, available at \url{http://arXiv.org/abs/1902.09220}},
}

\bib{PoonenStoll2014}{article}{
   author={Poonen, Bjorn},
   author={Stoll, Michael},
   title={Most odd degree hyperelliptic curves have only one rational point},
   journal={Ann. of Math. (2)},
   volume={180},
   date={2014},
   number={3},
   pages={1137--1166},
   issn={0003-486X},
   review={\MR{3245014}},
   doi={10.4007/annals.2014.180.3.7},
}

\bib{Stoll1992}{article}{
   author={Stoll, Michael},
   title={Galois groups over ${\bf Q}$ of some iterated polynomials},
   journal={Arch. Math. (Basel)},
   volume={59},
   date={1992},
   number={3},
   pages={239--244},
   issn={0003-889X},
   review={\MR{1174401}},
   doi={10.1007/BF01197321},
}

\bib{Stoll2001}{article}{
   author={Stoll, Michael},
   title={Implementing 2-descent for Jacobians of hyperelliptic curves},
   journal={Acta Arith.},
   volume={98},
   date={2001},
   number={3},
   pages={245--277},
   issn={0065-1036},
   review={\MR{1829626}},
   doi={10.4064/aa98-3-4},
}

\bib{Stoll2017b}{article}{
   author={Stoll, Michael},
   title={Chabauty without the Mordell-Weil group},
   conference={
      title={Algorithmic and experimental methods in algebra, geometry, and number theory},
   },
   book={
      publisher={Springer, Cham},
   },
   date={2017},
   pages={623--663},
   review={\MR{3792746}},
}

\bib{programs}{misc}{
  author={Stoll, Michael},
  title={Magma scripts implementing the algorithms in this paper},
  date={2019},
  note={\newline \url{http://www.mathe2.uni-bayreuth.de/stoll/magma/index.html}},
}

\end{biblist}
\end{bibdiv}

\end{document}